\newtheorem{theorem}{Theorem}
\newtheorem{definition}{Definition}
\newtheorem{lemma}{Lemma}
\newtheorem{proposition}{Proposition}
\newtheorem{question}{Question}
\newtheorem{remark}{Remark}
\newtheorem{corollary}{Corollary}
\newtheorem{example}{Example}
\newcommand{\NN}{\mathbb{N}}
\newcommand{\F}{\mathbb{F}}
\newcommand{\fq}{\mathbb{F}_q}
\newcommand{\fqn}{\mathbb{F}_{q^n}}
\newcommand{\cB}{\mathcal{B}}
\newcommand{\ann}{\mathrm{ann}}
\newcommand{\Tr}{\mathrm{Tr}}
\newcommand{\ord}{\mathrm{ord}}
\newcommand{\rmv}[1]{}
\title{A note on depth-$b$ normal elements}
\author{John Sheekey \and David Thomson}
\begin{document}
\maketitle
\begin{abstract} 
In this paper we study elements $\beta \in \fqn$ having normal $\alpha$-depth $b$; that is, elements for which $\beta, \beta - \alpha, \ldots, \beta-(b-1)\alpha$ are simultaneously normal elements of $\fqn$ over $\fq$. In~\cite{GoveGary}, the authors present the definition of normal $1$-depth but mistakenly present results for normal $\alpha$-depth for some fixed normal element $\alpha \in \fqn$. We explain this discrepancy and generalize the given definition of normal $(1-)$depth from~\cite{GoveGary} as well as answer some open questions presented in~\cite{GoveGary}.
\end{abstract}

{\bf Keywords:} finite fields, normal bases, primary decomposition

{\bf MSC:} 11T30, 11T71, 12Y05

\maketitle
\section{Introduction and notation} 

Throughout this document, we use the following standard notation. Let $p$ be a prime and let $q$ be a power of $p$, the finite field of $q$ elements is denoted $\fq$, and the finite degree $n$ extension of $\fq$ is denoted $\fqn$. The (relative) trace function is denoted $\Tr_{\fqn:\fq}\colon \fqn\to\fq$. We remark that the trace function is onto, and for any $k\not\equiv0\pmod{p}$, the element $k\alpha$ is also normal.  For any positive integer $n$, denote by $e = v_p(n)$, the \emph{$p$-ary valuation} of $n$; that is the largest integer $e$ such that $p^e$ divides $n$ but $p^{e+1}$ does not divide $n$. We also denote by $\tau = p^e$; specifically, $\tau = 1$ ($e=0$) if $\gcd(p,n) = 1$.

In Section~\ref{sec:Frob}, we derive conditions for elements to be normal that we will use later in the paper. In Section~\ref{sec:depthb}, we correct and generalize the notion of normal elements of depth $b$ from~\cite{GoveGary}. Also motivated by~\cite{GoveGary}, in Section~\ref{sec:conjugates} we observe that depth is not necessarily invariant under conjugation, and further analyze the depth of the conjugates of normal elements. 

\section{Finite fields as Frobenius modules}\label{sec:Frob}
In this section, we follow~\cite{LS, prim-1-normal} and introduce finite fields as Frobenius modules. Let $\sigma_q\colon \overline{\fq}\to\overline{\fq}$ denote the Frobenius $q$-automorphism. Clearly, $\sigma_q$ fixes $\fq$ and for any $n > 0$ and $\alpha \in \overline{\fq}$, $\sigma_q^n(\alpha) = \alpha$ if and only $\alpha \in \fqn$. Moreover, the Galois group of $\fqn$ over $\fq$ is cyclic of order $n$ and generated by $\sigma_q$. 

Let $\alpha \in \fqn$ and let $\cB$ consist of the Galois orbit of $\alpha$; that is, $\cB = \{\alpha, \alpha^q, \ldots, \alpha^{q^{n-1}}\}$. If $\cB$ is a linearly independent set, then $\alpha$ is a \emph{normal element} of $\fqn$ and $\cB$ is a \emph{normal basis} of $\fqn$ over $\fq$. We also call $\alpha$ a \emph{cyclic vector} for $\fqn$ as a vector space over $\fq$. 

For $f(x) = \sum_{i=0}^{m} a_i x^i$, denote the action of $f$ on $\overline{\fq}$ by 
\[ f \circ \alpha = f(\sigma_q)(\alpha) = \sum_{i=0}^{m} a_i \alpha^{q^i}. \]
Clearly, $(f + g)\circ \alpha = f\circ\alpha + g\circ\alpha$ for any $f,g \in \fq[x]$, and $(x^n-1) \circ \alpha = 0$ if and only if $\alpha \in \fqn$. Moreover, $(fg)\circ \alpha = f\circ(g\circ \alpha)$, so that if $f \circ \alpha = 0$ for any $\alpha \in \fqn$, then $f$ divides $x^n-1$.

\begin{definition} \ \\
\begin{enumerate}
\item For any $\alpha \in \fqn$, define the \emph{annihilator} of $\alpha$ as the polynomial $\ann_\alpha \in \fq[x]$ of smallest degree such that $\ann_\alpha \circ \alpha = 0$. 
\item For any $f \in \fq[x]$, define $\ker(f) = \{\alpha \in \fqn\colon \ann_\alpha = f\}$, the set of elements of $\fqn$ annihilated by $f$ under $\circ$. 
\end{enumerate}
\end{definition}

Observe that $\ann_\alpha$ annihilates any linear combination of Galois conjugates of $\alpha$. We have $\ker(x^n-1) = \fqn$ and $\ann_\alpha(x)$ divides $x^n-1$ for any $\alpha$. Moreover, $\alpha$ is a normal element of $\fqn$ over $\fq$ if and only if $\ann_\alpha(x) = x^n-1$ by linear independence of the conjugates of $\alpha$. We summarize these observations in Proposition~\ref{prop:numnormal}.

\begin{proposition}\label{prop:numnormal}
For any prime power $q$, the number of normal elements of $\fqn$ over $\fq$ is given by $\Phi_q(x^n-1)$, where $\Phi_q$ is Euler's totient function over $\fq$; that is, $\Phi_q(x^n-1)$ is the number of polynomials in $\fq[x]$ of degree less than $n$ that are relatively prime with $x^n-1$. 
\end{proposition}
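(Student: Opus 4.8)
The plan is to recognize $\fqn$ as a cyclic module over the quotient ring $R = \fq[x]/(x^n-1)$, with $x$ acting as $\sigma_q$, and then to count the generators of this cyclic module. The relations recorded in the excerpt show that $f \circ \alpha$ makes $\fqn$ an $R$-module: since $(x^n-1)\circ\alpha = 0$ for every $\alpha \in \fqn$, the $\fq[x]$-action factors through $R$. For a fixed $\alpha$, the cyclic submodule $R \circ \alpha = \{ f \circ \alpha : f \in \fq[x]\}$ is isomorphic to $\fq[x]/\ann_\alpha$, so its $\fq$-dimension equals $\deg \ann_\alpha$; since $\ann_\alpha$ divides $x^n-1$, this dimension is $n$ precisely when $\ann_\alpha = x^n-1$. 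Thus, by the characterization already established, $\alpha$ is normal if and only if $R \circ \alpha = \fqn$, i.e. if and only if $\alpha$ generates $\fqn$ as an $R$-module. Counting normal elements is therefore the same as counting the generators of $\fqn$ as a cyclic $R$-module.

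Next I would fix one normal element $\alpha_0$, whose existence is guaranteed by the normal basis theorem (equivalently, by Dedekind's independence of the characters $1, \sigma_q, \ldots, \sigma_q^{n-1}$, which forces the minimal polynomial of $\sigma_q$ on $\fqn$ to be the full $x^n-1$). Because $\ann_{\alpha_0} = x^n-1$, the $\fq[x]$-module homomorphism $\fq[x] \to \fqn$ sending $f \mapsto f \circ \alpha_0$ has kernel exactly the ideal $(x^n-1)$, so it descends to an injective $R$-module map $R \to \fqn$; comparing $\fq$-dimensions (both equal to $n$) shows this map is an isomorphism $R \xrightarrow{\ \sim\ } \fqn$ of $R$-modules.

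Transporting the problem across this isomorphism, the generators of $\fqn$ correspond precisely to the generators of the free rank-one module $R$ over itself, and an element $\bar f \in R$ generates $R$ as an $R$-module if and only if $R\bar f = R$, i.e. if and only if $\bar f$ is a unit of $R$. A residue class $\bar f$ is a unit of $\fq[x]/(x^n-1)$ exactly when $\gcd(f, x^n-1) = 1$, so the normal elements are in bijection with the $\Phi_q(x^n-1)$ residue classes represented by polynomials of degree less than $n$ that are coprime to $x^n-1$. This yields the claimed count.

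I expect the only genuine obstacle to be the input of existence: the argument produces a clean bijection between normal elements and $R^\times$ only once a single normal element is known to exist, so the normal basis theorem (or the independence-of-characters computation that the minimal polynomial of $\sigma_q$ equals $x^n-1$) is doing the essential work. The remaining identifications — that the cyclic generators of $R$ are exactly its units, and that the units of $\fq[x]/(x^n-1)$ are the classes coprime to $x^n-1$ — are standard facts about modules over, and quotients of, the principal ideal domain $\fq[x]$.
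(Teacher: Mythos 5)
Your proof is correct, and at its core it is the same module-theoretic argument the paper relies on: the paper gives no standalone proof of this proposition (it is presented as a summary of the preceding observations), but immediately afterwards it constructs exactly your isomorphism --- the map $\phi_\alpha(f) = f\circ\alpha$ for a fixed normal $\alpha$, with kernel $(x^n-1)$ --- and its Proposition~\ref{prop:normalelt} is precisely your unit criterion, that $\phi_\alpha(g)$ is normal if and only if $\gcd(g,x^n-1)=1$. The one genuine difference is in the logical architecture, and your version is the more careful one. The paper asserts, immediately after the proposition, that existence of normal elements can be ``gleaned'' from $\Phi_q(x^n-1)\neq 0$; but if the count is proved your way --- by fixing a normal $\alpha_0$ and transporting the problem to $R=\fq[x]/(x^n-1)$ --- that remark is circular, since existence is an input to the count, not an output of it. You flag this explicitly, and you also indicate the non-circular escape: Dedekind independence of the characters $1,\sigma_q,\ldots,\sigma_q^{n-1}$ forces the minimal polynomial of $\sigma_q$ to be $x^n-1$, which equals its characteristic polynomial, whence $\fqn$ is a cyclic $\fq[x]$-module; that route establishes existence and the count simultaneously without presupposing a normal element. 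The only step you leave tacit is that very implication (minimal polynomial equal to characteristic polynomial implies a cyclic vector exists); it is a standard consequence of the structure theorem over the PID $\fq[x]$, but it is where the real work hides if one wants the count to deliver existence rather than the other way around.
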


Existence of normal elements can be gleaned directly from Proposition~\ref{prop:numnormal}, since $\Phi_q(x^n-1)$ is nonzero for all $n \geq 1$. 

We now introduce a map central to the remainder of this work. Suppose $\alpha \in \fqn$ is normal and define the map $\phi_\alpha\colon \fq[x]\to \fqn$ by $\phi(f) = f\circ\alpha$. Then $\ker(\phi_\alpha) = ( x^n-1 )$, since $\alpha$ is normal; similarly $\phi_\alpha$ is onto since the set $\cB_{\alpha} = \{\alpha, \alpha^q, \ldots, \alpha^{q^{n-1}}\}$ is a basis. Hence $\fqn\cong \fq[x]/(x^n-1)$ as Frobenius modules. We will abuse notation and refer to this isomorphism also as $\phi_\alpha$.

Let $g(x) = \sum_{i=0}^{n-1} g_i x^{i} \in \fq[x]$, and $\beta = \phi_\alpha(g) =\sum_{i=0}^{n-1} g_i \alpha^{q^i}$. Then $\beta^q = \sum_{i=0}^{n-1} g_{i-1} \alpha^{q^i}$. Thus $\phi_\alpha^{-1}(\beta^q) = x\phi_\alpha^{-1}(\beta) \mod (x^n-1)$. Thus the Frobenius action on $\fq[x]/(x^n-1)$ is induced by $\overline{\sigma_q}(g):= xg(x)$, with $\overline{\sigma_q} = \phi_\alpha \sigma_q \phi_\alpha^{-1}$.

We exploit the decomposition of $\fq[x]/(x^n-1)$ as a Frobenius module. We follow the treatment in~\cite{Steel}. Let $e = \nu_p(n)$ be the valuation of $n$ at $p$ and let $x^n-1 = f_1^{e_1} \cdots f_r^{e_r}$ be the primary factorization of $x^n-1$, then $e_i = p^{e} = \tau$ for all $i = 1, \ldots, r$. In particular, $\tau = 1$ if $\gcd(p,n) = 1$. Denote by $\overline{V_i} = \fq[x]/(f_i^{\tau})$, then

\begin{equation}
 \fq[x]/(x^n-1) \cong \bigoplus_{i=1}^r \overline{V_i}.\label{eqn:primary_decomposition}
\end{equation}

Explicitly, we write the image of $g$ in $\bigoplus_{i=1}^r \overline{V_i}$ as $(g \mod f_1^{\tau},\ldots,g \mod f_r^{\tau})$. We abuse notation slightly and write $V_i=\phi_\alpha(\overline{V_i})$.
 
 \begin{equation}
\bigoplus_{i=1}^r V_i\cong \fqn \cong \fq[x]/(x^n-1) \cong \bigoplus_{i=1}^r \overline{V_i}.\label{eqn:primary_decomposition2}
\end{equation}

Equation~\eqref{eqn:primary_decomposition2} is the \emph{primary decomposition} of $\fqn$ as a Frobenius module. Moreover, we observe that each $V_i$ is stable under $\sigma_q$. 

\begin{proposition}\label{prop:normalelt}
Let $\alpha$ be a normal element of $\fqn$, and suppose $\beta=\phi_\alpha(g(x))$. Then $\ann_\beta = \frac{x^n-1}{\gcd(x^n-1,g(x))}$, and $\beta$ is normal if and only if $\gcd(x^n-1,g(x))=1$. Furthermore, $V_i=\ker(f_i^{\tau})$.
\end{proposition}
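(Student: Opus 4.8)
The plan is to exploit the isomorphism $\phi_\alpha\colon \fq[x]/(x^n-1) \to \fqn$ established above, under which the Frobenius $\sigma_q$ corresponds to multiplication by $x$. Since $\phi_\alpha$ is a Frobenius-module isomorphism, the annihilator of $\beta = \phi_\alpha(g)$ in $\fqn$ equals the annihilator of the class of $g$ in $\fq[x]/(x^n-1)$. So the computation reduces to a purely polynomial statement: I want to find the monic polynomial $h$ of least degree with $h \cdot g \equiv 0 \pmod{x^n-1}$, i.e., $x^n-1 \mid hg$.

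First I would argue the claim about $\ann_\beta$. Write $d = \gcd(x^n-1, g)$ and $x^n-1 = d \cdot m$, where $m = (x^n-1)/d$. I would show that $m \circ \beta = 0$: since $mg = m \cdot (g/d) \cdot d$ and $m \mid \cdots$ — more cleanly, $mg$ is divisible by $x^n-1$ because $x^n-1 = dm$ and $d \mid g$, so $mg = m d (g/d) = (x^n-1)(g/d)$, hence $m \circ \beta = m\circ(g\circ\alpha) = (mg)\circ\alpha = 0$ as $(x^n-1)\circ\alpha = 0$. Conversely, if $h \circ \beta = 0$ then $(hg)\circ\alpha = 0$, and since $\alpha$ is normal its annihilator is exactly $x^n-1$ (as noted before Proposition~\ref{prop:numnormal}), so $x^n-1 \mid hg$. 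The key step is then to show $m \mid h$: writing $x^n-1 = dm$ and using that $dm \mid hg$, one divides through by $d$ and uses that $m$ and $g/d$ are coprime (since $d$ already absorbed the full gcd) to conclude $m \mid h$. Thus $m = (x^n-1)/\gcd(x^n-1,g)$ is the least-degree annihilator, giving $\ann_\beta = m$.

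The normality criterion follows immediately: by the observation preceding this proposition, $\beta$ is normal precisely when $\ann_\beta = x^n-1$, and $\ann_\beta = (x^n-1)/\gcd(x^n-1,g) = x^n-1$ holds if and only if $\gcd(x^n-1,g) = 1$. For the final assertion $V_i = \ker(f_i^\tau)$, I would unwind the definitions: $\overline{V_i} = \fq[x]/(f_i^\tau)$ sits inside the direct sum~\eqref{eqn:primary_decomposition} as the summand annihilated by $f_i^\tau$, and since the $f_i^\tau$ are pairwise coprime the element of $\fq[x]/(x^n-1)$ corresponding to $\overline{V_i}$ is represented by a polynomial $g$ with $\gcd(x^n-1, g) = \prod_{j \neq i} f_j^\tau$; applying the annihilator formula just proved gives $\ann = f_i^\tau$, so under $\phi_\alpha$ the image $V_i$ is exactly $\ker(f_i^\tau)$.

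The main obstacle is the divisibility argument showing $m \mid h$ — one must be careful that after factoring out the gcd, the cofactor $g/d$ shares no factor with $m = (x^n-1)/d$, which requires that $d$ captures the full common part of $g$ with $x^n-1$ including multiplicities. Since $x^n-1$ may have repeated factors when $\tau > 1$, I would phrase this prime-by-prime over the irreducible factors of $x^n-1$, comparing $v_{f_i}$-valuations of $g$, $d$, $m$, and $h$ to verify the exponent bookkeeping works out; this is the one place where the repeated-factor structure genuinely matters.
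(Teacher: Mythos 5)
Your proof of the first two assertions is correct and follows the paper's route, with the paper's ``clearly'' fleshed out: writing $d=\gcd(x^n-1,g)$ and $m=(x^n-1)/d$, you get $m\circ\beta=0$ from $mg=(x^n-1)(g/d)$, and conversely $x^n-1\mid hg$ forces $m\mid h$ since $\gcd\bigl(m,g/d\bigr)=1$; that coprimality is the standard fact that quotients by a gcd are coprime, which holds in $\fq[x]$ with repeated factors included, so the valuation bookkeeping you worry about goes through (and is not really needed).

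The genuine gap is in the last assertion $V_i=\ker(f_i^{\tau})$. First, $\ker(f)$ must here be read as $\{\beta\colon f\circ\beta=0\}$, i.e.\ $\ann_\beta$ \emph{divides} $f$ (the second gloss in the paper's definition), since otherwise the statement is false ($0\in V_i$). With that reading, your key claim --- that an element corresponding to $\overline{V_i}$ is represented by $g$ with $\gcd(x^n-1,g)=\prod_{j\neq i}f_j^{\tau}$ exactly --- fails for general elements of $V_i$: membership in $\overline{V_i}$ only says $f_j^{\tau}\mid g$ for $j\neq i$, and the gcd may pick up extra factors of $f_i$ (take $g=f_i\prod_{j\neq i}f_j^{\tau}$, or $g=0$). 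So a general element of $V_i$ satisfies $\ann_\beta\mid f_i^{\tau}$, not $\ann_\beta=f_i^{\tau}$. If instead you meant only the cyclic generator (idempotent) of $\overline{V_i}$, then what you have shown is that this generator lies in $\ker(f_i^{\tau})$, giving at best $V_i\subseteq\ker(f_i^{\tau})$; the reverse inclusion $\ker(f_i^{\tau})\subseteq V_i$ is not addressed, and it is exactly half the claim. The paper avoids this by a two-way chain of equivalences: $\beta\in V_i$ iff $f_j^{\tau}\mid g$ for all $j\neq i$ iff $x^n-1\mid f_i^{\tau}g$ iff $f_i^{\tau}\circ\beta=0$ iff $\beta\in\ker(f_i^{\tau})$. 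Note that your own annihilator formula yields this cleanly: $f_i^{\tau}\circ\beta=0$ iff $\ann_\beta=(x^n-1)/\gcd(x^n-1,g)$ divides $f_i^{\tau}$ iff $\prod_{j\neq i}f_j^{\tau}$ divides $g$, i.e.\ $\beta\in V_i$. So the repair is short, but as written the step is a gap, not a proof.
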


\begin{proof}
Let $f(x) = \sum_{i=0}^{m} a_i x^i$. Then $f\circ \beta = 0$ if and only if $f(x)g(x)\in (x^n-1)$. The smallest degree polynomial satisfying $f(x)g(x)\in (x^n-1)$ is clearly $\frac{x^n-1}{\gcd(x^n-1,g(x))}$, as claimed. Since $\beta$ is normal if and only if $\ann_\beta(x) = x^n-1$, $\beta$ is normal if and only if $\gcd(x^n-1,g(x))=1$.

Now $\beta \in \phi_\alpha(\overline{V_i})$ if and only if $f_j^{\tau}$ divides $g(x)$ for all $j\ne i$, which occurs if and only if $f_i(x)^{\tau}g(x)\in (x^n-1)$, if and only if $f_i^{\tau}\circ \beta =0$, if and only if $\beta \in \ker(f_i^{\tau})$. 
\end{proof}

We summarise the characterisations of normal elements here.

\begin{proposition}\label{prop:normalequiv}
Let $\alpha$ be a normal element of $\fqn$, and suppose $\beta=\phi_\alpha(g(x))$. Let $x^n-1 = f_1^{\tau} \cdots f_r^{\tau}$, with the $f_i$ being distinct irreducible polynomials in $\fq[x]$. Let $g_i=g\mod f_i^{\tau}$, and $\beta=\sum_{i=1}^r \beta_i$ for $\beta_i\in V_i$. Then the following are equivalent:

\begin{enumerate}
\item
$\beta$ is normal,
\item
$\gcd(x^n-1,g(x))=1$,
\item
$\gcd(f_i,g_i)=1$ for each $i$.
\item 
$\ann_{\beta_i}=f_i^{\tau}$ for each $i$,
\item
$\beta_i\in \ker(f_i^{\tau})\backslash  \ker(f_i^{\tau-1})$ for each $i$,
\end{enumerate}
\end{proposition}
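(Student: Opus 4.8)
The plan is to take the equivalence (1) $\iff$ (2) as already established in Proposition~\ref{prop:normalelt} and then descend from the global polynomial $g$ to its primary components $g_i$, and finally to the module description of each $\beta_i$. For (2) $\iff$ (3) I would use that $x^n-1 = f_1^{\tau}\cdots f_r^{\tau}$ is a product of pairwise coprime primary factors, so $\gcd(x^n-1,g)=1$ holds if and only if $f_i\nmid g$ for every $i$. Since $g \equiv g_i \pmod{f_i^{\tau}}$ and $f_i \mid f_i^{\tau}$, we get $g \equiv g_i \pmod{f_i}$, so $f_i \mid g$ exactly when $f_i \mid g_i$; as $f_i$ is irreducible this is the same as $\gcd(f_i,g_i)=1$, which yields the equivalence.

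For (3) $\iff$ (4) I would identify $\beta_i$ with $g_i \in \overline{V_i} = \fq[x]/(f_i^{\tau})$ via the isomorphism~\eqref{eqn:primary_decomposition2} and then repeat the annihilator computation from Proposition~\ref{prop:normalelt} inside $\overline{V_i}$: a polynomial $h$ satisfies $h\circ\beta_i = 0$ if and only if $f_i^{\tau}\mid h g_i$, so the smallest-degree such $h$ is $\ann_{\beta_i}=\frac{f_i^{\tau}}{\gcd(f_i^{\tau},g_i)}$. Hence $\ann_{\beta_i}=f_i^{\tau}$ if and only if $\gcd(f_i^{\tau},g_i)=1$, which (again by irreducibility of $f_i$) is the same as $\gcd(f_i,g_i)=1$, i.e.\ (3). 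For (4) $\iff$ (5), since $\beta_i \in V_i = \ker(f_i^{\tau})$ the annihilator must be a power $f_i^{k}$ with $0 \le k \le \tau$; then $f_i^{\tau-1}\circ\beta_i \ne 0$ (equivalently $\beta_i \notin \ker(f_i^{\tau-1})$) is equivalent to $k=\tau$, which is exactly $\ann_{\beta_i}=f_i^{\tau}$.

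I expect the only genuinely delicate point to be the identification of $\beta_i$ with $g_i$ and the transfer of the annihilator formula into $\overline{V_i}$: once the primary decomposition~\eqref{eqn:primary_decomposition2} is invoked to justify that annihilators may be computed componentwise, everything else reduces to bookkeeping with the irreducibility of each $f_i$ and the pairwise coprimality of the primary factors $f_i^{\tau}$.
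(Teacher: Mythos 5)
Your proposal is correct and follows essentially the same route as the paper: (1)$\iff$(2) by citing Proposition~\ref{prop:normalelt}, (2)$\iff$(3) by reducing $g$ modulo $f_i$, and (4)$\iff$(5) by minimality of the annihilator. If anything, your handling of (3)$\iff$(4) --- computing $\ann_{\beta_i}$ inside $\overline{V_i}=\fq[x]/(f_i^{\tau})$ as $f_i^{\tau}/\gcd(f_i^{\tau},g_i)$ --- is more careful than the paper's, which writes $\beta_i=\phi_\alpha(g_i)$ and tests $f_i^k g_i\in(x^n-1)$, an identification that is only consistent with $\beta_i$ being the $V_i$-component of $\beta$ under the componentwise reading you make explicit.
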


\begin{proof}
($1. \iff 2.$) This is Proposition~\ref{prop:normalelt}. 

($2. \iff 3.$) Let $g_i = g\mod f_i^\tau$, then $g = h f_i^\tau + g_i$ for some $h \in \fq[x]$. Then (the irreducible) $f_i$ divides $g_i$ for some $1 \leq i \leq r$ if and only if $f_i$ divides $g$, contradicting $\gcd(x^n-1, g(x)) = 1$. 

($3. \iff 4.$) Let $g_i = g \mod f_i^\tau$ with $\beta_i = \phi_\alpha(g_i) \in V_i$. Clearly $\beta_i \in V_i$ if and only if $\beta_i \in \ker(f_i^\tau)$, so $\ann_{\beta_i} = f_i^k$ for $1 \leq k \leq \tau$. Now, $f_i^k\circ \beta_i = 0$ if and only if $f_i^k g_i \circ \alpha = 0$ if and only if $f_i^k g_i \in (x^n-1)$. Now $\gcd(f_i, g_i) = 1$ if and only if $k = \tau$ for all $i$. 

($4. \iff 5.$) By the minimality of $\ann_{\beta_i}$, we have $\ann_{\beta_i} = f_i^\tau$ if and only if $\beta_i \in \ker(f_i^\tau)$ and $\beta_i \notin \ker(f_i^{\tau-1})$. \qedhere
\end{proof}
	
If $\gcd(p,n) =1$, then $\tau=1$, and thus we get the following.

\begin{corollary}\label{cor:normalPD}
Let $\gcd(p,n) =1$ and let $\beta = \beta_1 + \beta_2 + \cdots + \beta_r$ with $\beta_i \in V_i$, then $\beta$ is a normal element if and only if $\prod_{i=1}^r \beta_i \neq 0$. 
\end{corollary}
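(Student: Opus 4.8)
The plan is to deduce Corollary~\ref{cor:normalPD} directly from Proposition~\ref{prop:normalequiv}, specialized to the case $\tau = 1$. Since $\gcd(p,n) = 1$ forces $\tau = p^e = 1$, the primary factorization simplifies to $x^n - 1 = f_1 \cdots f_r$ with the $f_i$ distinct irreducibles, and each $\overline{V_i} = \fq[x]/(f_i)$ is now a \emph{field}. The equivalence $(1. \iff 5.)$ of Proposition~\ref{prop:normalequiv} tells us that $\beta$ is normal if and only if $\beta_i \in \ker(f_i^\tau) \setminus \ker(f_i^{\tau-1})$ for each $i$; with $\tau = 1$ this reads $\beta_i \in \ker(f_i) \setminus \ker(f_i^0) = V_i \setminus \{0\}$, since $f_i^{\tau-1} = f_i^0 = 1$ annihilates only the zero element. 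So the content reduces to observing that $\beta$ is normal precisely when every component $\beta_i$ is nonzero.

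The remaining step is purely formal: I would note that $\prod_{i=1}^r \beta_i \neq 0$ holds if and only if each factor $\beta_i \neq 0$. Here one must be slightly careful about where the product lives, since the $\beta_i$ sit in different summands $V_i$ of the direct sum decomposition~\eqref{eqn:primary_decomposition2}. The natural reading is that the product is taken in $\fqn$ (or equivalently, in the ring $\bigoplus \overline{V_i}$ under componentwise multiplication after identifying via $\phi_\alpha$), and since $\fqn$ is a field and each $\overline{V_i}$ is a field (as $f_i$ is irreducible and $\tau = 1$), a product of field elements vanishes if and only if one of the factors does. Thus $\prod_{i=1}^r \beta_i \neq 0 \iff \beta_i \neq 0 \text{ for all } i$, which by the preceding paragraph is exactly the condition for normality.

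Assembling these, the proof is a two-line specialization: apply Proposition~\ref{prop:normalequiv}$(1. \iff 5.)$ with $\tau = 1$ to get that normality is equivalent to all $\beta_i$ being nonzero, then rephrase ``all $\beta_i$ nonzero'' as ``$\prod_i \beta_i \neq 0$.'' I do not expect any genuine obstacle; the only point needing care is the interpretation of the product $\prod_{i=1}^r \beta_i$ across distinct summands, and making explicit that the zero-product property being invoked is that of a field (or of a finite product of fields, component by component). One sentence clarifying that each $V_i$ is a field when $\tau = 1$ suffices to make the zero-product step rigorous.
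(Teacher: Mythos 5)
Your proposal is correct and takes essentially the same route as the paper, which offers no separate proof at all: it presents the corollary as the immediate specialization of Proposition~\ref{prop:normalequiv} to $\tau=1$, exactly as you do. One caution: your parenthetical claim that the product may ``equivalently'' be computed componentwise in $\bigoplus_i \overline{V_i}$ is false---$\phi_\alpha$ is an isomorphism of Frobenius modules, not of rings, and the componentwise product of the images of the $\beta_i$ (each supported in a single summand) would vanish identically whenever $r\geq 2$---but this is harmless, since your actual argument takes the product in the field $\fqn$, which is the correct reading of the statement.
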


\section{Depth-$b$ normal elements}\label{sec:depthb}

\begin{definition}\label{def:depth}
Let $b\in \NN$ with $b\leq p$. If $\beta \in \fqn$ is such that $\beta, \beta - \alpha, \ldots, \beta-(b-1)\alpha$ are normal elements of $\fqn$ over $\fq$ for some $\alpha\in \fqn$, then we say that $\beta$ has \emph{normal $\alpha$-depth $b$}.  
\end{definition}

In \cite{GoveGary}, the authors introduced normal depth, where the definition was for $\theta=1$. However the results in \cite{GoveGary} are in fact referring to normal $\alpha$-depth, for some fixed normal element $\alpha$. We will explain the discrepancy below, and consider the more general problem.

We remark that Defintion~\ref{def:depth} can be extended for $b \geq p$ when $q$ is a power of $p$ by imposing an ordering on the elements of $\fq$ (or even further still, on $\fqn$). Since~\cite{GoveGary} and Section~\ref{sec:conjugates} are mostly concerned with depth $2$, we will not treat these sorts of extensions in this work.

We recap (and generalize) the main question from~\cite{GoveGary}.

\begin{question}\label{question}
To what extent do the conjugates of an element $\beta$ having normal $\alpha$-depth $b$ also have normal $\alpha$-depth $b$? 
\end{question}

In particular in \cite{GoveGary}, they focus on normal depth $2$ and search for \emph{lonely elements}: that is, normal elements of depth $2$ having a conjugate that fails to have normal depth $2$.

\begin{lemma}
Without loss of generality, fix a normal element $\alpha$ of $\fqn$ satisfying $\Tr_{\fqn:\fq}(\alpha)=n/\tau$, since if $\alpha'$ is any normal element with $\Tr_{\fqn:\fq}(\alpha') = k \neq 0$, the element $\alpha = \alpha' \frac{\tau}{nk}$ is normal (since $\tau/n \not\equiv 0\pmod{p}$). Then,
\begin{enumerate}
\item 
$\phi_\alpha^{-1}(\alpha^{q^i})=x^i$ and $\phi_\alpha^{-1}(1) = (\tau/n)\frac{x^n-1}{x-1}$, 
\item 
the image of $\alpha$ in $\bigoplus_{i=1}^r \overline{V_i}$ is $(1,1,\ldots,1)$, and the image of $1$ is $((x-1)^{\tau-1},0,\ldots,0)$.
\end{enumerate}
\end{lemma}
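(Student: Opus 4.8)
The plan is to push everything through the isomorphism $\phi_\alpha$ and then reduce the resulting polynomials modulo the primary factors $f_i^\tau$; the only part with real content is tracking a single constant in the $(x-1)$-component. First I would justify that the normalisation can be arranged. Every normal element has nonzero trace: since $\phi_\alpha\big(\tfrac{x^n-1}{x-1}\big)=\sum_{i=0}^{n-1}\alpha^{q^i}=\Tr_{\fqn:\fq}(\alpha)$ and $\tfrac{x^n-1}{x-1}$ has degree $n-1$, a vanishing trace would give a nonzero annihilator of $\alpha$ of degree below $n$, contradicting $\ann_\alpha=x^n-1$. Writing $m=n/\tau$ (so $m\not\equiv0\pmod p$), any normal $\alpha'$ with $\Tr(\alpha')=k\ne0$ may be replaced by the normal element $\tfrac{m}{k}\alpha'$, whose trace is exactly $m=n/\tau$; I fix such an $\alpha$ for the rest of the argument.

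For part 1, the equality $\phi_\alpha^{-1}(\alpha^{q^i})=x^i$ is immediate from the definitions, since $\phi_\alpha(x^i)=x^i\circ\alpha=\alpha^{q^i}$. For $\phi_\alpha^{-1}(1)$ I would reuse the trace computation above: $\phi_\alpha\big(\tfrac{x^n-1}{x-1}\big)=\Tr(\alpha)=n/\tau$, so $\phi_\alpha\big(\tfrac{\tau}{n}\tfrac{x^n-1}{x-1}\big)=1$, giving $\phi_\alpha^{-1}(1)=\tfrac{\tau}{n}\tfrac{x^n-1}{x-1}$.

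For part 2, the image of $\alpha$ in $\bigoplus_i\overline{V_i}$ is the tuple of residues of $\phi_\alpha^{-1}(\alpha)=1$ (the $i=0$ case of part 1) in the rings $\overline{V_i}=\fq[x]/(f_i^\tau)$, namely $(1,\ldots,1)$. The image of $1$ is the substantive claim. After reordering the factors so that $f_1=x-1$ (which divides $x^n-1$ since $1$ is a root), we have $x^n-1=(x-1)^\tau\prod_{i\ge2}f_i^\tau$ and $\tfrac{x^n-1}{x-1}=(x-1)^{\tau-1}\prod_{i\ge2}f_i^\tau$. For each $i\ge2$ the factor $f_i^\tau$ divides $\tfrac{x^n-1}{x-1}$, so that coordinate of the image of $1$ is $0$. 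For the first coordinate I would reduce $\tfrac{\tau}{n}(x-1)^{\tau-1}\prod_{i\ge2}f_i^\tau$ modulo $(x-1)^\tau$; writing $\prod_{i\ge2}f_i^\tau=c+(x-1)R(x)$ with $c=\prod_{i\ge2}f_i(1)^\tau$ kills every term except $\tfrac{\tau}{n}\,c\,(x-1)^{\tau-1}$.

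The main obstacle — and the one place where the precise normalisation is used — is verifying that this surviving constant equals $1$. For this I would identify $\prod_{i=1}^r f_i$ with the separable radical $x^m-1$ of $x^n-1$, so that $\prod_{i\ge2}f_i=\tfrac{x^m-1}{x-1}=1+x+\cdots+x^{m-1}$ and hence $\prod_{i\ge2}f_i(1)=m$; thus $c=m^\tau$. Since $m\in\fp$ is fixed by $t\mapsto t^p$, we get $m^\tau=m^{p^e}=m$, whence $\tfrac{\tau}{n}\,c=\tfrac{\tau}{n}\,m=1$. The first coordinate is therefore exactly $(x-1)^{\tau-1}$, so the image of $1$ is $((x-1)^{\tau-1},0,\ldots,0)$. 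The case $\tau=1$ degenerates correctly: $(x-1)^{\tau-1}=1$ and the reduction is simply evaluation at $x=1$, where $\tfrac1n\cdot n=1$.
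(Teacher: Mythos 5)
Your proof is correct, and its overall skeleton matches the paper's: part 1 via the trace computation $\phi_\alpha\bigl(\tfrac{x^n-1}{x-1}\bigr)=\Tr_{\fqn:\fq}(\alpha)$, and part 2 by reducing $\phi_\alpha^{-1}(1)=(\tau/n)\tfrac{x^n-1}{x-1}$ modulo each primary factor. The one place where you genuinely diverge is the key congruence modulo $(x-1)^\tau=x^\tau-1$. The paper gets it by grouping the geometric series: $\sum_{i=0}^{n-1}x^i \equiv (n/\tau)\sum_{i=0}^{\tau-1}x^i = (n/\tau)(x-1)^{\tau-1} \pmod{x^\tau-1}$, using $x^\tau\equiv 1$ so that each residue class modulo $\tau$ contributes $n/\tau$ equal terms. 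You instead factor $\tfrac{x^n-1}{x-1}=(x-1)^{\tau-1}\prod_{i\ge 2}f_i^\tau$, identify $\prod_{i=1}^r f_i$ with the separable radical $x^m-1$ (where $m=n/\tau$), evaluate $\prod_{i\ge2}f_i(1)=m$, and invoke $m^\tau=m$ in $\fp$ to pin down the surviving constant. Both are valid; the paper's manipulation is shorter and avoids needing the separable radical and Fermat, while yours makes visible exactly which structural facts (the radical factorization, $m\in\fp$) force the constant to be $1$. Two further points in your favour: you prove that every normal element has nonzero trace, which the paper leaves implicit but which is needed for the ``without loss of generality'' to be legitimate; and you silently correct a typo in the statement --- scaling by $\tfrac{\tau}{nk}$ as written gives trace $\tau/n$, whereas the scaling $\tfrac{m}{k}=\tfrac{n}{\tau k}$ you use is the one producing trace $n/\tau$ as claimed.
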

\begin{proof}
\begin{enumerate}
\item For $f(x) = \sum_{i=0}^{m} a_i x^i$, we have $\phi_\alpha(f) = f \circ \alpha = \sum_{i=0}^{m} f_i \alpha^{q^i}$. Hence, $x^i \circ \alpha = \alpha^{q^i}$ or $\phi_\alpha^{-1}(\alpha^{q^i}) = x^i$.  Similarly, 
\[ 
\Tr_{\fqn:\fq}(\alpha) = \left(\sum_{i=0}^{n-1} x^i\right)\circ \alpha = n/\tau,
\]
so by linearity, $\phi_\alpha^{-1}(1) = (\tau/n) \sum_{i=0}^{n-1} x^i = (\tau/n) \frac{x^n-1}{x-1}$. 
\item Since $\alpha = \phi_\alpha(1)$, we have $g_i = g\mod f_i^\tau = 1$ for all $1 \leq i \leq r$. Similarly, $1 = (\tau/n)\phi_\alpha (\frac{x^n-1}{x-1})$, and with $\tau = p^{\nu_p(n)}$ and by linearity of Frobenius,
\begin{align*}
\frac{x^n-1}{x-1} &= \sum_{i=0}^{n-1} x^i \equiv (n/\tau) \sum_{i=0}^{\tau-1} x^i \mod (x^\tau -1) \\ 
                  &= (n/\tau) \frac{x^\tau-1}{x-1} = (n/\tau)(x-1)^{\tau-1}. \qedhere
\end{align*}
\end{enumerate}
\end{proof}

\begin{proposition}
Let $\alpha \in \fqn$ be normal. An element $\beta=\phi_\alpha(g(x))$ has normal $\alpha$-depth $b$ if and only if $\gcd(x^n-1,g(x)-c)=1$ for all $c\in \{0,\ldots,b-1\}$.
\end{proposition}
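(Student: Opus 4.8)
The plan is to reduce the statement to a repeated application of Proposition~\ref{prop:normalelt}, one for each shift $\beta - c\alpha$. The linearity of the isomorphism $\phi_\alpha$ is what makes this work: since the depth condition only involves subtracting integer multiples of the single element $\alpha$, each shifted element will again be of the form $\phi_\alpha(\cdot)$ for an explicit polynomial, to which the normality criterion of Proposition~\ref{prop:normalelt} applies directly.

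First I would record that $\phi_\alpha^{-1}(\alpha) = 1$. This is immediate from the definition of $\phi_\alpha$: the constant polynomial $1$ acts by $1 \circ \alpha = \alpha$, so $\alpha = \phi_\alpha(1)$, and no special normalisation of $\alpha$ is needed here. Next, using that $\phi_\alpha$ is an $\fq$-linear isomorphism, for each $c \in \{0,\ldots,b-1\}$ I would write
\[
\beta - c\alpha = \phi_\alpha(g) - c\,\phi_\alpha(1) = \phi_\alpha(g - c),
\]
where $c$ on the right is read as the constant polynomial (equivalently, the corresponding element of $\fq$). Applying Proposition~\ref{prop:normalelt} to the polynomial $g - c$ then yields that $\beta - c\alpha$ is normal if and only if $\gcd(x^n-1, g(x) - c) = 1$. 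Finally, by Definition~\ref{def:depth}, $\beta$ has normal $\alpha$-depth $b$ precisely when every one of $\beta, \beta - \alpha, \ldots, \beta - (b-1)\alpha$ is normal, so I would simply conjoin the $b$ equivalences just obtained to conclude.

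The only point that needs care — and it is more a sanity check than a genuine obstacle — is that the integer shifts $c \in \{0,\ldots,b-1\}$ must correspond to distinct, well-defined scalars of $\fq$, so that subtracting $c\alpha$ really means subtracting the constant polynomial $c$ rather than something collapsed modulo $p$. This is exactly guaranteed by the hypothesis $b \leq p$ built into Definition~\ref{def:depth}: the residues $0, 1, \ldots, b-1$ are pairwise distinct in the prime field $\fp \subseteq \fq$. Beyond this, the argument is a routine transport of the single-element normality criterion through the linear map $\phi_\alpha$, so I do not anticipate any serious difficulty.
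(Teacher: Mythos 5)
Your proof is correct and follows essentially the same route as the paper, which simply cites the linearity of $\phi_\alpha$ together with the normality criterion ($\beta=\phi_\alpha(g)$ is normal iff $\gcd(x^n-1,g)=1$); your write-up just makes explicit the identity $\beta - c\alpha = \phi_\alpha(g-c)$ and the role of $b\le p$, which the paper leaves implicit.
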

\begin{proof}
The proof is immediate from the linearity of $\phi$ and from Proposition~\ref{prop:normalequiv}, Remark 2. 
\end{proof}
In \cite{GoveGary}, the number $\#\{g:\gcd(x^n-1,g(x)-c)=1~\forall~c\in \{0,\ldots,b-1\}\}$ was defined as $\Phi_b(x^n-1)$.
 
\begin{theorem}
Let $\alpha \in \fqn$ be normal with $\Tr_{\fqn:\fq}(\alpha) = \tau/n$, let $e = \nu_p(n)$, and let $\beta=\phi_\alpha(g(x))$ also be normal. Then 
\begin{enumerate}
\item if $e > 0$, then $\beta$ has normal $1$-depth $p$; moreover, $\beta - c$ is normal for all $c \in \fq$, 
\item if $e = 0$, then $\beta$ has normal $1$-depth $b$ if and only if $g(1) \geq b$ (under a suitable implicit ordering of the elements of $\fq$). In particular, $\beta - c$ is normal if and only if $g(1) \neq c$. 
\end{enumerate}
\end{theorem}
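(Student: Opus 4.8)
The plan is to reduce the normality of each $\beta - c\cdot 1$ to a single primary component, namely the one attached to the factor $x-1$, and then read off both cases from the valuation $\tau = p^e$. First I would set $f_1 = x-1$, which is always a factor of $x^n-1$, and recall from the preceding Lemma that the image of the identity $1 \in \fqn$ in $\bigoplus_{i=1}^r \overline{V_i}$ is $((x-1)^{\tau-1}, 0, \ldots, 0)$. Writing $\beta = \phi_\alpha(g)$ with components $g_i = g \bmod f_i^\tau$, linearity of $\phi_\alpha$ then shows that $\beta - c$ has image $(g_1 - c(x-1)^{\tau-1},\, g_2, \ldots, g_r)$; the essential point is that only the first component is disturbed.

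Next I would invoke the equivalence $(1)\Leftrightarrow(3)$ of Proposition~\ref{prop:normalequiv}: the element $\beta - c$ is normal exactly when $\gcd(f_i, (\beta-c)_i) = 1$ for every $i$. Since $\beta$ is assumed normal, that same Proposition already gives $\gcd(f_i, g_i) = 1$ for all $i \geq 2$, and these components are untouched by the subtraction of $c\cdot 1$. Hence the only surviving condition is $\gcd(x-1,\, g_1 - c(x-1)^{\tau-1}) = 1$, which—because $x-1$ is linear—is equivalent to the nonvanishing of $g_1 - c(x-1)^{\tau-1}$ at $x=1$. Using $g_1(1) = g(1)$ and the fact that $(x-1)^{\tau-1}$ evaluates to $0$ at $x=1$ when $\tau > 1$ and to $1$ when $\tau = 1$, this collapses to the condition $g(1) \neq 0$ in the case $e>0$ and to $g(1) \neq c$ in the case $e=0$.

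From here both parts follow. When $e>0$, normality of $\beta$ forces $g(1)\neq 0$ (since $x-1 \mid x^n-1$ and $\gcd(x^n-1,g)=1$ by Proposition~\ref{prop:normalelt}), so the condition holds for every $c \in \fq$; in particular $\beta - c$ is normal for all $c$, and $\beta$ has normal $1$-depth $p$. When $e=0$, we obtain that $\beta - c$ is normal if and only if $g(1) \neq c$, which is the final ``in particular'' assertion, and consequently $\beta$ has normal $1$-depth $b$ if and only if $g(1) \notin \{0, 1, \ldots, b-1\}$.

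I do not expect a serious obstacle here, as the argument is a direct combination of the primary decomposition with the explicit image of $1$ supplied by the Lemma. The one point needing care is the interpretation of the ordering in part~2: the condition $g(1)\notin\{0,\ldots,b-1\}$ should be rephrased as $g(1)\geq b$ under an ordering of $\fq$ in which the prime-field representatives $0,1,\ldots,p-1$ come first. One must check that elements $g(1)\notin\fp$ are then handled correctly—they exceed every $b\leq p$, consistent with such $\beta$ attaining the maximal depth $p$—and that the representatives $0,\ldots,b-1$ remain distinct in $\fq$, which is guaranteed by the hypothesis $b \leq p$.
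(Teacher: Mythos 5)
Your proposal is correct and follows essentially the same route as the paper: both pass to the primary decomposition, use the Lemma's computation that $1$ has image $((x-1)^{\tau-1},0,\ldots,0)$, observe that subtracting $c$ only disturbs the component at $f_1 = x-1$, and then read off the two cases from whether $\tau > 1$ or $\tau = 1$. Your phrasing via evaluation at $x=1$ is interchangeable with the paper's divisibility-by-$(x-1)$ argument and its appeal to Corollary~\ref{cor:normalPD}, and your closing remark on the implicit ordering of $\fq$ is a welcome clarification of a point the paper leaves implicit.
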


\begin{proof}
Let $g_i=g\mod f_i^{\tau}$. Then the image of $\beta-c$ in $\bigoplus_{i=1}^r \overline{V_i}$ is $(g_1 -c (x-1)^{\tau-1},g_2,\ldots,g_r)$. 

If $e>0$ and $\beta$ is normal, then $\gcd(g_1,(x-1)^\tau)=1$. If $\beta-c$ is not normal, then $(x-1)$ divides $g_1 -c (x-1)^{\tau-1}$, implying $(x-1)$ divides $g_1$, a contradiction. Thus $\beta-c$ is normal for all $c\in \fq$.

If $e=0$, then $g_1=g(1)$, and the image of $g$ is $(g(1) -c ,g_2,\ldots,g_r)$. By Corollary~\ref{cor:normalPD}, $\beta$ is normal if and only if $g_i\ne0$ for each $i$. Hence, $\beta-c$ is {\it not} normal if and only if $g(1)= c$. 
\end{proof}

In \cite{GoveGary} the authors mistakenly state that the number of elements having normal $1$-depth $b$ is equal to $\Phi_b(x^n-1)$. This assumably arose by the erroneous assumption that $\phi_\alpha(1)=1$. Instead, since $\phi_\alpha(1) = \alpha$, $\Phi_b(x^n-1)$ refers to the number of elements having normal $\alpha$-depth $b$, and so for the remainder of this paper we focus on this case as well.

\section{Conjugates: Lonely and Sociable elements}\label{sec:conjugates}

Throughout this section, we use the notation from Section~\ref{sec:depthb}; in particular, $x^n-1 = (f_1 \cdots f_r)^{\tau}$ where $n = \tau m$ with $\gcd(m,\tau) = 1$, and $f_i$ is irreducible for $1 \leq i \leq r$. Suppose $\beta=\phi_\alpha(g(x))$ has normal $\alpha$-depth $b$. We consider the normal $\alpha$-depth of its conjugates. Recall that $\beta^{q^i}=\phi_\alpha(x^i g(x))$. Thus we need to consider the common divisors of $x^ig(x)-c$ with $x^n-1$, or equivalently $g(x)-cx^i$ with $x^n-1$.

\begin{definition}\label{def:lonely-social}
An element $\beta\in \fqn$ is \emph{$(\alpha,b)$-lonely} if $\beta$ has normal $\alpha$-depth $b$, but $\beta^{q^i}$ does not have normal $\alpha$-depth $b$ for some $i$. If $\beta^{q^i}$ has normal $\alpha$-depth $b$ for all $i$, we say that $\beta$ is \emph{$(\alpha,b)$-sociable}.
\end{definition}

Similar to Proposition~\ref{prop:normalequiv}, we have a number of equivalent characterizations of sociable elements. 

\begin{theorem}\label{thm:tfae-sociable}
Let $x^n-1 = f_1^\tau f_2^\tau \cdots f_r^\tau$ with $f_i$ irreducible, $1 \leq i \leq r$. Let $\beta\in \fqn$ with $g(x)=\phi_\alpha^{-1}(\beta)$ and let $g_i = g\mod f_i^\tau$. Then the following are equivalent:

\begin{enumerate}
\item
$\beta$ is $(\alpha,b)$-sociable,
\item
$\gcd(x^n-1,g(x)-cx^j)=1$ for all $j\in \{0,\ldots,n-1\}$, $c\in \{0,\ldots,b-1\}$,
\item
$\gcd(f_i,g_i - cx^j)=1$ for all $i\in \{1,\ldots,r\}$, $j\in \{0,\ldots,n-1\}$, $c\in \{0,\ldots,b-1\}$,
\item
$g(\theta)\notin \{c\theta^j:c \in \{0,\ldots,b-1\}, j\in \{0,\ldots,n-1\}\}$ and $\theta$ a root of $x^n-1$.
\end{enumerate}
\end{theorem}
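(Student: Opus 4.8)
The plan is to prove Theorem~\ref{thm:tfae-sociable} by establishing the chain of equivalences $(1)\iff(2)\iff(3)\iff(4)$, unwinding the definitions of sociability and the primary decomposition set up in Section~\ref{sec:Frob}. The whole argument should be a repackaging of the techniques already used in Proposition~\ref{prop:normalequiv}, now applied simultaneously to every conjugate $\beta^{q^i}$ and every shift $\beta^{q^i}-c\alpha$.

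First I would prove $(1)\iff(2)$. By Definition~\ref{def:lonely-social}, $\beta$ is $(\alpha,b)$-sociable exactly when $\beta^{q^i}$ has normal $\alpha$-depth $b$ for all $i\in\{0,\ldots,n-1\}$. Using $\beta^{q^i}=\phi_\alpha(x^i g(x))$ (as noted at the start of this section) together with the Proposition characterizing normal $\alpha$-depth, $\beta^{q^i}$ has normal $\alpha$-depth $b$ precisely when $\gcd(x^n-1, x^i g(x)-c)=1$ for all $c\in\{0,\ldots,b-1\}$. Since $\gcd(x^n-1,x)=1$, multiplying the argument by the unit $x^{n-i}$ modulo $x^n-1$ shows this is equivalent to $\gcd(x^n-1, g(x)-cx^{n-i})=1$; as $i$ ranges over $\{0,\ldots,n-1\}$ so does $j=n-i$, giving condition $(2)$.

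Next, $(2)\iff(3)$ is the Chinese Remainder Theorem / primary-decomposition step, identical in spirit to the $(2)\iff(3)$ implication in Proposition~\ref{prop:normalequiv}. Since $x^n-1=\prod_i f_i^\tau$ with the $f_i$ distinct and irreducible, $\gcd(x^n-1, g(x)-cx^j)=1$ holds if and only if no $f_i$ divides $g(x)-cx^j$, and because $f_i$ is irreducible, $f_i\mid (g(x)-cx^j)$ if and only if $f_i\mid(g_i-cx^j)$ where $g_i=g\bmod f_i^\tau$; running over all $i,j,c$ yields condition $(3)$. Finally, $(3)\iff(4)$ reinterprets divisibility by the irreducible $f_i$ as vanishing at its roots: letting $\theta$ denote a root of $f_i$ (equivalently, any root of $x^n-1$), $f_i\mid(g_i-cx^j)$ is equivalent to $g_i(\theta)=c\theta^j$, and since $g_i\equiv g\pmod{f_i}$ we may replace $g_i(\theta)$ by $g(\theta)$. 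Quantifying over all roots $\theta$ and all admissible $c,j$ gives exactly the statement that $g(\theta)\notin\{c\theta^j\}$, which is condition $(4)$.

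I do not expect any single step to be a genuine obstacle, since each link reuses machinery already in place. The one point needing care is the index bookkeeping in $(1)\iff(2)$: one must verify that the substitution $j=n-i$ is a bijection on $\{0,\ldots,n-1\}$ and that $x$ is indeed invertible modulo $x^n-1$, so that replacing $x^i g(x)-c$ by $g(x)-cx^{j}$ preserves the gcd condition rather than merely implying it in one direction. A second subtlety is confirming that in $(3)$ and $(4)$ it genuinely suffices to test $g_i\bmod f_i$ (respectively $g(\theta)$ for $\theta$ a root of $f_i$) rather than working modulo the full power $f_i^\tau$; this is legitimate because $f_i$ is irreducible, so its divisibility of $g_i-cx^j$ depends only on residues modulo $f_i$, exactly as exploited in the earlier $(3)\iff(4)$ argument.
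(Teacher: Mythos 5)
Your proposal is correct and follows essentially the same route as the paper: the equivalences $(1)\iff(2)\iff(3)$ by combining Definition~\ref{def:lonely-social}, the identity $\beta^{q^i}=\phi_\alpha(x^ig(x))$ (with $x$ a unit modulo $x^n-1$), and Proposition~\ref{prop:normalequiv}, then $(3)\iff(4)$ by translating divisibility by the irreducible $f_i$ into vanishing at a root $\theta_i$ together with $g(\theta_i)=g_i(\theta_i)$. You merely make explicit the gcd--unit bookkeeping and the CRT step that the paper compresses into ``come directly from applying Proposition~\ref{prop:normalequiv}'' and the remark at the start of Section~\ref{sec:conjugates}.
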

\begin{proof}
The equivalence of items $1.$, $2.$, $3.$ come directly from applying Proposition~\ref{prop:normalequiv} to Definition~\ref{def:lonely-social}. Here we prove only $3. \iff 4.$ 

Suppose $\gcd(f_i, g_i-cx^j) \neq 1$ for some $1 \leq i \leq r$, $0 \leq j \leq n-1$, which occurs if and only if $f_i(\theta_i) = g_i(\theta_i) - c\theta_i^j = 0$ for some $\theta_i \in \F_{q^{\deg(f_i)}}$; that is, $g_i(\theta_i) = c\theta_i^j$. The fourth equivalence follows, since $g_i = g\mod f_i^\tau$, so $g(\theta_i) = g_i(\theta_i).$
\end{proof}

 The number of $\beta$ that are $(\alpha,b)$-sociable is the number of $g$ satisfying the conditions on their roots given in the fourth equivalence of Theorem~\ref{thm:tfae-sociable}. 

\begin{lemma}\label{lem:trivial}
Let $x^n-1 = f_1^\tau \cdots f_r^{\tau}$ and let $\theta_i$ be a root of $f_i$, $1 \leq i \leq r$. Then there are exactly $q^{\deg(f_i)}$ possible values for $g(\theta_i)$ for $g\in \fq[x]$. Furthermore, let $\theta_{ij} = \theta_i^{q^j}$ for $j = 0, 1, \ldots, \deg(f_i)$ be the roots of $f_i$ in $\fq(\theta_i)$, and fix $\gamma_{i} \in \fq(\theta_i)$, $1 \leq i \leq r$. Then there exist precisely $q^{\frac{n(\tau-1)}{\tau}}$ polynomials $g$ of degree at most $n$ with $g(\theta_{ij}) = \gamma_{i}^{q^j}$ for all  $1 \leq i \leq r$, $0 \leq j \leq \deg(f_i)-1$.
\end{lemma}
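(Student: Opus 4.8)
The plan is to treat the two claims separately and to reduce both to standard facts about polynomial evaluation over finite fields, specifically the Chinese Remainder Theorem for $\fq[x]/(x^n-1)$ and a dimension count.

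For the first claim, I would observe that evaluating a polynomial $g \in \fq[x]$ at a fixed root $\theta_i$ of the irreducible $f_i$ is precisely the quotient map $\fq[x] \to \fq[x]/(f_i) \cong \fq(\theta_i) = \F_{q^{\deg f_i}}$. Since this map is surjective, every element of $\F_{q^{\deg f_i}}$ arises as $g(\theta_i)$ for some $g$, giving exactly $q^{\deg f_i}$ possible values. This is essentially immediate and will be the short part of the argument.

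For the second (counting) claim, the key point is that the prescribed data $g(\theta_{ij}) = \gamma_i^{q^j}$ is a \emph{consistent} Frobenius-equivariant specification: because $g$ has coefficients in $\fq$, we automatically have $g(\theta_i^{q^j}) = g(\theta_i)^{q^j}$, so specifying $g(\theta_i) = \gamma_i$ forces the values at all conjugate roots $\theta_{ij}$, and the stated constraints are exactly these forced values. Hence the real content is: choosing $g \bmod f_i$ freely for each $i$ determines $g(\theta_i)$, and I want to count the polynomials of degree at most $n$ realizing a fixed such choice. I would work inside $\fq[x]/(x^n-1)$, whose $\fq$-dimension is $n$, and note $x^n-1 = \prod_{i=1}^r f_i^\tau$ with $\sum_i \tau \deg f_i = n$, so $\sum_i \deg f_i = n/\tau$. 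By CRT, $\fq[x]/(x^n-1) \cong \bigoplus_i \fq[x]/(f_i^\tau)$, and fixing $g(\theta_i) = \gamma_i$ amounts to fixing the residue $g \bmod f_i$ (the constant term of the image in $\fq[x]/(f_i^\tau)$ relative to the $f_i$-adic expansion). The fibre of this specification in each $\fq[x]/(f_i^\tau)$ has dimension $(\tau-1)\deg f_i$, since $\dim \fq[x]/(f_i^\tau) = \tau \deg f_i$ and one residue condition mod $f_i$ cuts out $\deg f_i$ dimensions. Summing over $i$ gives total dimension $(\tau-1)\sum_i \deg f_i = (\tau-1)\, n/\tau = n(\tau-1)/\tau$, so the number of such $g$ is $q^{n(\tau-1)/\tau}$, as claimed.

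The main obstacle I anticipate is bookkeeping rather than conceptual: I must be careful that "polynomials $g$ of degree at most $n$" is the right ambient set to identify with $\fq[x]/(x^n-1)$ (degree less than $n$ is the natural system of representatives, and the degree-$n$ monomial is redundant modulo $x^n-1$), and that prescribing the single value $g(\theta_i)$ is exactly one condition mod $f_i$ and not mod $f_i^\tau$ — the higher-order $f_i$-adic coefficients remain free, which is precisely what produces the factor $(\tau-1)\deg f_i$ of freedom per prime. I would also verify that the constraints across different $i$ are independent, which is exactly the content of CRT, and that the Frobenius-conjugacy conditions $g(\theta_{ij}) = \gamma_i^{q^j}$ impose no \emph{additional} constraints beyond $g(\theta_i) = \gamma_i$, since $g$ already has $\fq$-coefficients.
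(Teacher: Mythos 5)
Your proposal is correct and takes essentially the same route as the paper: use Frobenius equivariance ($g(\theta_i^{q^j})=g(\theta_i)^{q^j}$ for $g$ with $\fq$-coefficients) to reduce the prescribed data to fixing $g(\theta_i)=\gamma_i$ for each $i$, equivalently fixing $g$ modulo $f_1\cdots f_r$, and then count by a dimension argument over $\fq$ to get $q^{n-n/\tau}=q^{n(\tau-1)/\tau}$. Your CRT-per-component packaging is just a local version of the paper's global observation that two solutions differ by a multiple of $f_1\cdots f_r$; as a bonus, your care about surjectivity (giving ``exactly'' rather than ``at most'' $q^{\deg f_i}$ values) and about taking degree \emph{less than} $n$ as the ambient set tidies up two small imprecisions in the paper's own wording.
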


\begin{proof}
Clearly $g(\theta_i)\in \fq(\theta_i)=\mathbb{F}_{q^{\deg(f_i)}}$, and so there are at most $q^{\deg(f_i)}$ possible values for $g(\theta_i)$. As $g$ has coefficients in $\fq$, we have that $g(\theta_i^{q^j})=g(\theta_i)^{q^j}$ for any $j$.

With $n = n_0\tau$, two polynomials $g$ and $h$ in $\fq[x]$ agree on all $n_0$-th roots of unity if and only if $f_1f_2 \cdots f_r$ divides $g-h$. As $\deg(f_1f_2 \cdots f_r) = n_0$, there are $q^{n-n_0} = q^{n_0(\tau-1)}$ such polynomials $h$ of degree at most $n$.
\end{proof}

For $\beta$ that are $(\alpha, b)$-sociable, Theorem~\ref{thm:tfae-sociable} provides a number of forbidden values for $g(\theta_i)$. The precise number of forbidden values that ensure that $\beta$ is $(\alpha,b)$-sociable is complicated in general, but we can solve it completely in some cases left open in \cite{GoveGary}.

\begin{proposition}
The number of elements in $\fqn$ that are $(\alpha,b)$-sociable is {\it at most}
\[
q^{\frac{n(\tau-1)}{\tau}}\prod_{i=1}^r (q^{\deg( f_i)} - n(b-1)-1)
\]
\end{proposition}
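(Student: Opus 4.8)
The plan is to convert the sociability condition into an independent counting problem at each irreducible factor, assemble the global count via Lemma~\ref{lem:trivial}, and then reduce the proposition to a single estimate on the number of forbidden residues. First I would invoke the equivalence $(1.\iff 4.)$ of Theorem~\ref{thm:tfae-sociable}: writing $g=\phi_\alpha^{-1}(\beta)$ and letting $\theta_i$ be a root of $f_i$, the element $\beta$ is $(\alpha,b)$-sociable precisely when
\[
g(\theta_i)\notin S_i:=\{\,c\,\theta_i^{\,j}:c\in\{0,\dots,b-1\},\ j\in\{0,\dots,n-1\}\,\}\qquad(1\le i\le r).
\]
Because $g$ has coefficients in $\fq$, we have $g(\theta_i^{q^j})=g(\theta_i)^{q^j}$, and $S_i$ is stable under $x\mapsto x^q$ (each $c$ lies in $\fp$, and raising to the $q$-th power merely permutes the powers $\theta_i^{\,j}$). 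Hence the constraint at $\theta_i$ automatically forces it at every Galois conjugate of $\theta_i$, so sociability is equivalent to the single condition $g(\theta_i)\notin S_i$ at each of the $r$ representatives $\theta_1,\dots,\theta_r$.

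Next I would count. Since $S_i\subseteq\fq(\theta_i)$, for every admissible value $\gamma_i\in\fq(\theta_i)\setminus S_i$ Lemma~\ref{lem:trivial} supplies exactly $q^{\,n(\tau-1)/\tau}$ elements $\beta\in\fqn$ with $g(\theta_i)=\gamma_i$ for all $i$, and these constraints at distinct $\theta_i$ may be imposed independently. Summing over admissible tuples $(\gamma_1,\dots,\gamma_r)$ shows that the number of $(\alpha,b)$-sociable elements equals
\[
q^{\,n(\tau-1)/\tau}\prod_{i=1}^r\bigl(q^{\deg f_i}-N_i\bigr),\qquad N_i:=|S_i|.
\]
The whole proposition therefore reduces to an estimate for the number $N_i$ of distinct forbidden residues at each $\theta_i$: to obtain the stated bound I would establish $N_i\ge n(b-1)+1$ for every $i$, which gives $q^{\deg f_i}-N_i\le q^{\deg f_i}-n(b-1)-1$ and hence the claimed product upon reassembly.

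Controlling $N_i$ is where the real difficulty lies, and I expect it to be the main obstacle. The set $S_i$ consists of $0$ (contributed by all pairs with $c=0$) together with the $(b-1)n$ products $c\,\theta_i^{\,j}$ with $c\ne 0$; the target value $n(b-1)+1$ is exactly what results if all of these nonzero products are pairwise distinct. The obstruction is that the powers $\theta_i^{\,j}$ repeat with period $\ord(\theta_i)$, and two products $c_1\theta_i^{\,j_1}$ and $c_2\theta_i^{\,j_2}$ coincide precisely when $c_1c_2^{-1}$ is a power of $\theta_i$ lying in $\fp^{\times}$; thus the collisions are governed by the overlap between $\mu_{\ord(\theta_i)}$ and the ratios of the scalars $\{1,\dots,b-1\}\subseteq\fp$, i.e.\ essentially by $\gcd(\ord(\theta_i),p-1)$. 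Ruling out enough of these coincidences to secure $N_i\ge n(b-1)+1$ is the delicate point — precisely the enumeration the discussion preceding the statement calls ``complicated in general.'' My plan is therefore to isolate the tractable regimes (for instance the squarefree case $\tau=1$ with $\theta_i$ of large multiplicative order, or the depth-$2$ case $b=2$, where the products $c\theta_i^{\,j}$ reduce to scaled roots of unity and the collision count can be written down exactly) and carry out the estimate there, obtaining the bound in the cases left open in~\cite{GoveGary} while flagging the general collision analysis as the step needing the most care.
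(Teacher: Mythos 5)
Your reduction is exactly the one the paper uses: the equivalence $1.\iff 4.$ of Theorem~\ref{thm:tfae-sociable} together with Lemma~\ref{lem:trivial} give that the number of $(\alpha,b)$-sociable elements equals $q^{n(\tau-1)/\tau}\prod_{i=1}^r\bigl(q^{\deg f_i}-N_i\bigr)$, where $N_i=\#\{c\theta_i^j : 0\le c\le b-1,\ 0\le j\le n-1\}$; that part of your proposal is sound. The divergence is in the final step, and the obstacle you flagged is not merely delicate --- it is fatal to your plan. The inequality $N_i\ge n(b-1)+1$ that you would need is false in essentially every case: collisions only ever shrink $S_i$, so $N_i\le (b-1)\ord(\theta_i)+1\le n(b-1)+1$, and worse, the factor $f_1=x-1$ is always present, with $\theta_1=1$ giving $N_1=b$, which is strictly smaller than $n(b-1)+1$ whenever $n,b\ge 2$. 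No restriction to ``tractable regimes'' can repair this. Indeed the proposition as stated (``at most'') is false: in the paper's own later example with $q=7$, $n=3$, $b=3$, the displayed formula evaluates to $(7-6-1)^3=0$, while there are $36$ sociable elements.

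What the paper's proof actually does with this same setup is bound $N_i$ in the \emph{opposite} direction: trivially $N_i\le n(b-1)+1$ (at most $n(b-1)$ nonzero products $c\theta_i^j$, plus $0$, with equality exactly when these products are pairwise distinct), so each factor satisfies $q^{\deg f_i}-N_i\ge q^{\deg f_i}-n(b-1)-1$, and hence the number of sociable elements is at \emph{least} $q^{n(\tau-1)/\tau}\prod_{i=1}^r\bigl(q^{\deg f_i}-n(b-1)-1\bigr)$ (interpreting any negative factor as $0$). The word ``at most'' in the statement contradicts the paper's own proof, which explicitly takes an \emph{upper} bound on the number of \emph{forbidden} values, i.e.\ a lower bound on the count; the lower-bound reading is also the one consistent with the subsequent proposition, whose exact count $q^{n(\tau-1)/\tau}\prod_i\bigl(q^{\deg f_i}-(b-1)\ord(\theta_i)-1\bigr)$ dominates this expression because $\ord(\theta_i)\le n$. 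So from where your proposal stands, the correct finish is two lines: bound $N_i$ from above rather than below, and state the conclusion as a lower bound. The collision analysis you outline (via $\ord(\theta_i)$ and its interaction with the scalars in $\mathbb{F}_p$) is not needed here; it is precisely what the paper's \emph{later} results carry out to obtain exact counts under hypotheses such as $(n,q-1)=1$ or $n\mid q-1$.
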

\begin{proof}
By Lemma~\ref{lem:trivial}, there are at most $q^{\deg(f_i)}$ choices for $g(\theta_i)$ for each $i = 1, \ldots, r$. By the final assertion of Theorem~\ref{thm:tfae-sociable}, an upper bound on the number of forbidden choices of $g(\theta_i)$ occurs when all of $c\theta_i^j$ are distinct for all $c \in \{1, \ldots, b-1\}$ and $j \in \{0,\ldots, n-1\}$. This gives $n(b-1)$ forbidden values for $g(\theta_i)$, and the further restriction $g(\theta_i)\ne 0$ together with Lemma \ref{lem:trivial} completes the proof.
\end{proof}

\begin{proposition}
Suppose $(n,q-1)=1$. Then the number of elements that are $(\alpha,b)$-sociable is
\[
q^{\frac{n(\tau-1)}{\tau}}\prod_{i=1}^r (q^{\deg(f_i)} - (b-1)\ord(\theta_i)-1),
\]
where $\theta_i$ is a root of $f_i$, $1 \leq i \leq r$. 
\end{proposition}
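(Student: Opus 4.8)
The plan is to count the number of $(\alpha,b)$-sociable elements by refining the upper bound from the previous proposition into an exact count, using the hypothesis $(n,q-1)=1$ to determine precisely how many values among $\{c\theta_i^j : c\in\{1,\ldots,b-1\},\, j\in\{0,\ldots,n-1\}\}$ are distinct. By Theorem~\ref{thm:tfae-sociable}(4), $\beta$ is sociable exactly when $g(\theta_i)$ avoids the forbidden set $\{c\theta_i^j\}$ (including $c=0$, giving the value $0$) for every $i$; by Lemma~\ref{lem:trivial} the choices of $g(\theta_i)$ across the distinct $i$ are independent, each contributing a multiplicative factor, and there are $q^{n(\tau-1)/\tau}$ polynomials per fixed tuple of values. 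So the entire problem reduces to counting the size of the forbidden set for a single root $\theta_i$.

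The heart of the argument, which I expect to be the main obstacle, is showing that the number of nonzero forbidden values $c\theta_i^j$ (with $1\le c\le b-1$, $0\le j\le n-1$) equals exactly $(b-1)\ord(\theta_i)$. First I would observe that since $\theta_i$ is a root of $x^n-1$, its powers $\theta_i^j$ cycle with period $\ord(\theta_i)\mid n$, so the set $\{\theta_i^j : 0\le j\le n-1\}$ is precisely the cyclic group $\langle\theta_i\rangle$ of order $\ord(\theta_i)$. The forbidden nonzero values are then $\{c\,\zeta : c\in\{1,\ldots,b-1\},\, \zeta\in\langle\theta_i\rangle\}$, a union of $(b-1)$ cosets (scalar multiples by $c\in\fq^*$) of the multiplicative group $\langle\theta_i\rangle$. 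The key point is that these $(b-1)$ cosets are pairwise disjoint: I would show that $c_1\langle\theta_i\rangle = c_2\langle\theta_i\rangle$ for $c_1,c_2\in\fq^*$ forces $c_1/c_2 \in \langle\theta_i\rangle\cap\fq^* = \{1\}$, using $(n,q-1)=1$.

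The crux is that intersection computation. Since $b\le p$ by Definition~\ref{def:depth}, the scalars $1,\ldots,b-1$ are distinct nonzero elements of $\fq$, so it suffices to prove that distinct scalars in $\fq^*$ yield disjoint cosets, i.e.\ that $\langle\theta_i\rangle\cap\fq^* = \{1\}$. An element of $\langle\theta_i\rangle$ has multiplicative order dividing $\ord(\theta_i)$, which divides $n$; an element of $\fq^*$ has order dividing $q-1$; so any common element has order dividing $\gcd(n,q-1)=1$, forcing it to be $1$. Hence the $(b-1)$ cosets $c\langle\theta_i\rangle$ are pairwise disjoint, each of size $\ord(\theta_i)$, giving exactly $(b-1)\ord(\theta_i)$ nonzero forbidden values; adding the single forbidden value $0$ (the normality constraint $g(\theta_i)\ne 0$) yields $(b-1)\ord(\theta_i)+1$ forbidden values in total.

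Finally I would assemble the count: the number of admissible values for $g(\theta_i)$ is $q^{\deg(f_i)} - (b-1)\ord(\theta_i) - 1$, and multiplying over $i=1,\ldots,r$ together with the factor $q^{n(\tau-1)/\tau}$ from Lemma~\ref{lem:trivial} gives the stated formula. I would also note that the $c=0$ value $0$ cannot coincide with any nonzero $c\theta_i^j$, so no over-counting occurs when combining the zero constraint with the coset count, which makes the subtraction of exactly $(b-1)\ord(\theta_i)+1$ legitimate. The disjointness-of-cosets step is where all the content lies; everything else is bookkeeping built on the earlier propositions.
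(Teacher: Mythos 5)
Your proposal is correct and follows essentially the same route as the paper: the paper also reduces to showing the forbidden set $\{c\theta_i^j\}$ has exactly $(b-1)\ord(\theta_i)$ nonzero elements, using the fact that $(n,q-1)=1$ forces $1$ to be the only $n$-th root of unity in $\fq$ (your $\langle\theta_i\rangle\cap\fq^*=\{1\}$ order-divides-$\gcd$ computation is the same fact), and then invokes Lemma~\ref{lem:trivial} for the factor $q^{n(\tau-1)/\tau}$ and the product over $i$. Your explicit remarks on the disjointness of the cosets $c\langle\theta_i\rangle$ and on $b\le p$ guaranteeing the scalars are distinct are slightly more detailed than the paper's wording, but the substance is identical.
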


\begin{proof}
Since $(n,q-1)=1$, $x^n-1$ has only one root in $\fq$, namely $1$. Thus as each $\theta_i^j$ is an $n$-th root of $1$ (in some extension field), we have that $\theta_i^j\notin \fq$ for all $i$ and all $1<j<\ord(\theta_i)$. Therefore $\#\{c\theta_i^j:c \in \{1,\ldots,b-1\}, j\in \{0,\ldots,n-1\}\} = (b-1)\ord(\theta_i)$. As $g(\theta_i)\ne 0$, there are $q^{\deg(f_i)} - (b-1)\ord(\theta_i)-1$ choices for $g(\theta_i)$ for each $i$ for which $\phi_\alpha(g)$ is $(\alpha,b)$-sociable. The factor $q^{\frac{n(\tau-1)}{\tau}}$ follows from Lemma \ref{lem:trivial}.
\end{proof}

\begin{corollary}\label{cor:n=qs}
Suppose $n=q^s$. Then the number of elements that are $(\alpha,b)$-sociable is
\[
q^{q^s-q^{s-1}}(q - b).
\]
\end{corollary}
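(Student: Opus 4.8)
The plan is to specialize the previous proposition to the case $n = q^s$. The key observation is that when $n = q^s$, the valuation $e = \nu_p(n)$ is maximal in a specific sense: writing $n = \tau m$ with $\gcd(m,\tau)=1$, I first need to identify $\tau$ and $m$. Since $q$ is a power of $p$, say $q = p^f$, we have $n = q^s = p^{fs}$, so $\tau = p^{\nu_p(n)} = p^{fs} = q^s = n$, and hence $m = 1$. This means $x^n - 1 = (x-1)^\tau$ with $\tau = n = q^s$, so there is exactly $r = 1$ irreducible factor, namely $f_1 = x - 1$ of degree $1$, with root $\theta_1 = 1$.

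Next I would check the hypothesis of the preceding proposition, namely $(n, q-1) = 1$. Here $n = q^s$, and since $q \equiv 1 \pmod{q-1}$ we have $q^s \equiv 1 \pmod{q-1}$, so $\gcd(q^s, q-1) = \gcd(1, q-1) = 1$; thus the hypothesis holds and the formula applies. I would then substitute the computed quantities into the expression
\[
q^{\frac{n(\tau-1)}{\tau}}\prod_{i=1}^r \bigl(q^{\deg(f_i)} - (b-1)\ord(\theta_i) - 1\bigr).
\]
With $r = 1$, $\deg(f_1) = 1$, and $\theta_1 = 1$ (so $\ord(\theta_1) = 1$), the product collapses to the single factor $q^1 - (b-1)\cdot 1 - 1 = q - b$.

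Finally I would simplify the exponent of the leading power of $q$. With $\tau = n = q^s$, the exponent is
\[
\frac{n(\tau-1)}{\tau} = \frac{q^s(q^s - 1)}{q^s} = q^s - 1.
\]
This gives $q^{q^s - 1}(q - b)$, which I must reconcile with the claimed $q^{q^s - q^{s-1}}(q-b)$. The discrepancy in the exponent is the one point demanding care, and I expect it to be the main (and only real) obstacle: the two exponents agree only when $s = 1$, so either the intended reading of $n = q^s$ differs (for instance, one may intend $\tau = q^{s-1}$ and $m = q$, corresponding to $\gcd(m,\tau)=\gcd(q, q^{s-1})\ne 1$, which would instead force applying the earlier bound or a variant) or there is a typographical slip in the stated corollary. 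I would resolve this by recomputing $\tau$ and $m$ directly from the factorization $x^{q^s}-1 = (x-1)^{q^s}$ over $\fq$ and tracing which proposition genuinely applies; this bookkeeping step is where the proof must be pinned down precisely, after which the substitution is routine.
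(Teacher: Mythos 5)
Your derivation follows exactly the route the paper intends: Corollary~\ref{cor:n=qs} is meant to be the specialization of the preceding proposition (the case $(n,q-1)=1$) to $n=q^s$, and every step of your specialization is correct. Since $n=q^s$ is a power of $p$, the hypothesis $\gcd(n,q-1)=1$ holds, $\tau=p^{\nu_p(n)}=q^s=n$, and $x^n-1=(x-1)^n$, so $r=1$, $f_1=x-1$, $\theta_1=1$, $\ord(\theta_1)=1$, and the product collapses to $q-b$. The discrepancy you found in the exponent is not a flaw in your argument but an error in the stated corollary: the correct exponent is
\[
\frac{n(\tau-1)}{\tau} \;=\; n-\frac{n}{\tau} \;=\; q^s-1,
\]
not $q^s-q^{s-1}$; the two agree only when $s=1$, which is the only case exercised by the remark following the corollary (there $q=n$), so the slip went unnoticed. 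You can confirm your value without invoking the proposition at all: since $x^n-1=(x-1)^n$, we have $\gcd(x^n-1,\,g(x)-cx^j)=1$ if and only if $g(1)\neq c$, so $\beta=\phi_\alpha(g)$ is $(\alpha,b)$-sociable if and only if $g(1)\notin\{0,\ldots,b-1\}$, giving $q^{n-1}(q-b)=q^{q^s-1}(q-b)$ elements. A further sanity check: for $b=1$, being $(\alpha,1)$-sociable is the same as being normal (normality is conjugation-invariant), and the number of normal elements when $x^n-1=(x-1)^n$ is $q^n-q^{n-1}=q^{q^s-1}(q-1)$, matching your formula and not the paper's. Finally, you can drop the hedge in your last paragraph: the alternative reading $\tau=q^{s-1}$, $m=q$ is ruled out by the paper's own definitions, since $\tau$ is by definition $p^{\nu_p(n)}$ and that reading would violate $\gcd(m,\tau)=1$. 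The corollary should simply be restated with exponent $q^s-1$.
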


For a specific example of Corollary~\ref{cor:n=qs}, taking $q=n$, $b=2$, we get that there are $q^{q-1}(q-2)$ elements which are $(\alpha,2)$-sociable in $\mathbb{F}_{q^q}$.

\begin{corollary}
Suppose $n$ is prime, $n\notin \{p,q-1\}$, and let $x^n-1 = (x-1)f_2 \cdots f_r$. Then the number of elements that are $(\alpha,b)$-sociable is
\[
(q-b)\prod_{i=2}^r (q^{\deg(f_i)} - (b-1)n-1).
\]
\end{corollary}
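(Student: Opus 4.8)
The plan is to read this off as a direct specialisation of the preceding proposition, whose formula
\[
q^{\frac{n(\tau-1)}{\tau}}\prod_{i=1}^r \bigl(q^{\deg(f_i)} - (b-1)\ord(\theta_i)-1\bigr)
\]
I would evaluate term by term under the present hypotheses. First I would confirm that the hypothesis $(n,q-1)=1$ of that proposition is in force; for a prime $n$ this is exactly the requirement that $n\nmid q-1$. Second, since $n$ is prime and $n \neq p$ we have $p \nmid n$, so $e = \nu_p(n) = 0$ and $\tau = p^e = 1$; consequently the leading factor is $q^{n(\tau-1)/\tau} = q^0 = 1$ and it disappears, which is why the stated count carries no power-of-$q$ prefactor.

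Next I would evaluate the product factor by factor, the key point being to isolate the trivial factor $f_1 = x-1$. Its root is $\theta_1 = 1$, with $\ord(\theta_1) = 1$ and $\deg(f_1) = 1$, so the corresponding term is $q^1 - (b-1)\cdot 1 - 1 = q - b$. For each remaining factor $f_i$ with $i \geq 2$, I would invoke the primality of $n$: every $n$-th root of unity other than $1$ is a \emph{primitive} $n$-th root of unity, so any root $\theta_i$ of $f_i$ satisfies $\ord(\theta_i) = n$. Hence each such term is exactly $q^{\deg(f_i)} - (b-1)n - 1$. Multiplying the isolated term $q-b$ against $\prod_{i=2}^r \bigl(q^{\deg(f_i)} - (b-1)n-1\bigr)$, together with the vanished prefactor, yields the claimed count.

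There is no real obstacle here; the only thing to get right is the bookkeeping that separates the linear factor $x-1$ (whose root has order $1$, not $n$) from the rest, and recognising that primality of $n$ is precisely what forces the single common value $\ord(\theta_i)=n$ across all of $f_2,\dots,f_r$. If one preferred to avoid citing the preceding proposition, one could argue directly from the fourth characterisation in Theorem~\ref{thm:tfae-sociable} together with Lemma~\ref{lem:trivial}, counting the forbidden values $\{c\theta_i^j\}$ for each $i$; since $(n,q-1)=1$ makes those values distinct and nonzero in the same way, this reproduces the identical factors but is strictly more work.
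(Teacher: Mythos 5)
Your overall route is exactly the intended one: the paper gives no separate proof of this corollary, treating it precisely as the specialization of the preceding proposition that you carry out, and your bookkeeping is right --- the prefactor disappears because $n$ prime and $n\neq p$ force $\tau=1$, the factor $x-1$ contributes $q-b$, and primality of $n$ forces $\ord(\theta_i)=n$ for every root of $f_2,\dots,f_r$.

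The one step that does not go through as written is your first one: ``confirm that the hypothesis $(n,q-1)=1$ of that proposition is in force.'' You correctly translate $(n,q-1)=1$ into $n\nmid q-1$ for prime $n$, but the corollary only assumes $n\neq q-1$, which is strictly weaker: a prime $n$ can divide $q-1$ without equaling it. Concretely, take $q=7$, $n=3$, $b=3$: every stated hypothesis holds ($3$ is prime, $3\neq 7$, $3\neq 6$), yet $\gcd(3,6)=3$, so the proposition does not apply; its formula would give $(7-3)(7-2\cdot 3-1)^2=0$, whereas the paper's own later example for $q=7$, $n=3$, $b=3$ counts $36$ sociable elements. So the corollary as literally printed is false for $b\geq 3$, and the verification you promise in your first step cannot be completed; the hypothesis the authors evidently intend, and the one your argument actually uses, is $n\nmid q-1$. (For $b=2$ the gap is harmless: the forbidden set at a nontrivial root $\theta_i$ is $\{0\}\cup\langle\theta_i\rangle$, of size $n+1$ whether or not $\theta_i\in\fq$, so your factors are correct; trouble arises only for $b\geq 3$, when a ratio of two elements of $\{1,\dots,b-1\}$ can be a nontrivial $n$-th root of unity lying in $\fq$, which requires $n\mid q-1$.) Your derivation is correct once the hypothesis is read as $n\nmid q-1$, but you should state explicitly that this reading is a repair of the statement rather than a consequence of it.
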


In \cite{GoveGary}, focus is applied to the case $b=2$, the case of $(\alpha,2)$-lonely/sociable elements. We now apply Theorem~\ref{thm:tfae-sociable} to this situation.

\begin{proposition}\label{prop:linearsplit}
Suppose $n|(q-1)$. Then the number of elements that are $(\alpha,2)$-sociable is
\begin{equation}
\prod_{i=1}^n \left(q- \frac{n}{(i,n)}-1\right).\label{eqn:coprime}
\end{equation}
\end{proposition}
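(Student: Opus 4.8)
The plan is to apply Theorem~\ref{thm:tfae-sociable}, item $4$, specialized to $b = 2$, and to exploit the fact that $n \mid (q-1)$ forces $x^n-1$ to split completely over $\fq$. First I would observe that $n \mid (q-1)$ implies $\gcd(n,p)=1$ (since $p \mid q$ but $p \nmid q-1$), so $\tau = 1$; moreover $\fq^*$ is cyclic of order $q-1$ and $n \mid (q-1)$, so $\fq$ contains all $n$ of the $n$-th roots of unity and $x^n-1 = \prod_{i=1}^{n}(x-\theta_i)$ factors into $r = n$ distinct linear polynomials $f_i = x-\theta_i$. Fixing a primitive $n$-th root of unity $\theta \in \fq$ and indexing $\theta_i = \theta^i$ for $1 \le i \le n$, each $\theta_i$ has order $\ord(\theta_i) = n/(i,n)$, which is precisely what must appear in the product.

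Next I would translate the sociability condition. By Theorem~\ref{thm:tfae-sociable}, item $4$, with $b=2$ (so $c \in \{0,1\}$), the element $\beta = \phi_\alpha(g)$ is $(\alpha,2)$-sociable exactly when, for every $i$, we have $g(\theta_i) \notin \{0\} \cup \{\theta_i^j : 0 \le j \le n-1\}$. Since $\theta_i$ generates a cyclic subgroup of $\fq^*$ of order $\ord(\theta_i)$, the set $\{\theta_i^j : 0 \le j \le n-1\}$ equals $\langle \theta_i \rangle$ and has exactly $\ord(\theta_i)$ elements (it is the full subgroup because $\ord(\theta_i) \mid n$), none of which is $0$. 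Hence the forbidden set for $g(\theta_i)$ has precisely $\ord(\theta_i) + 1 = \frac{n}{(i,n)} + 1$ elements of $\fq$, leaving $q - \frac{n}{(i,n)} - 1$ admissible values.

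Finally I would count. Because $\tau = 1$, the interpolation factor $q^{n(\tau-1)/\tau}$ of Lemma~\ref{lem:trivial} equals $1$; equivalently, prescribing the $n$ values $g(\theta_1),\ldots,g(\theta_n)$ at the $n$ distinct points $\theta_i \in \fq$ determines a unique representative $g$ of degree less than $n$ by Lagrange interpolation, and $\phi_\alpha$ is a bijection onto $\fqn$. The choices at distinct roots are independent, so multiplying the per-root counts gives $\prod_{i=1}^{n}\left(q - \frac{n}{(i,n)} - 1\right)$, as claimed.

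The only real subtlety I anticipate is the bookkeeping around the indexing: ensuring that letting $j$ range over $\{0,\ldots,n-1\}$ recovers the entire subgroup $\langle \theta_i\rangle$, and that the identification $\ord(\theta^i) = n/(i,n)$ is applied so that the product index matches the statement. The remaining collision count is routine and is made especially clean by the fact that for $b=2$ there is a single nonzero coefficient $c = 1$, so no cross-terms $c\theta_i^j$ coming from different values of $c$ can coincide and inflate the forbidden set.
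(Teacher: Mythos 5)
Your proposal is correct and follows essentially the same route as the paper's own proof: split $x^n-1$ into distinct linear factors $x-\theta^i$ using $n\mid(q-1)$, apply the sociability criterion (Theorem~\ref{thm:tfae-sociable}, item 4) with $b=2$ to get the forbidden set $\{0\}\cup\langle\theta_i\rangle$ of size $\ord(\theta_i)+1 = \frac{n}{(i,n)}+1$ at each root, and multiply the admissible counts. The only difference is that you spell out the final counting step (via $\tau=1$ and Lagrange interpolation) which the paper compresses into ``the result follows.''
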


\begin{proof}
As $n|(q-1)$, $x^n-1$ factorises in to a product of distinct linear factors over $\fq$. Let $f_i = x-\theta_i$. Then $\beta$ is $(\alpha,2)$-sociable if and only if $g(\theta_i)\ne 0, \theta_i^j$ for any $j$. Thus the number of forbidden choices for $g(\theta_i)$ is $\ord(\theta_i)+1$. Letting $\theta$ be a primitive $n$-th root of unity in $\fq$, and letting $\theta_i=\theta^i$, then $\ord(\theta_i)=\frac{n}{(i,n)}$ and the result follows.
\end{proof}

\begin{remark}
Note that Formula~\eqref{eqn:coprime} is not true in general. Issues arise when there exist $c_1,c_2\in \{0,\ldots,b-1\}$ such that $c_1=c_2\theta_i^j$, in which case $\#\{c\theta_i^j:c \in \{0,\ldots,b-1\}, j\in \{0,\ldots,n-1\}\}$ is more difficult to calculate. The conditions of the previous two theorems were chosen to avoid this possibility.
\end{remark}

The following example of Proposition~\ref{prop:linearsplit} provides an answer to the first open question left in~\cite{GoveGary}. 

\begin{example}
Suppose $n=3$, and suppose $x^3-1$ factors into distinct linear factors over $\fq$, say $x^3-1=(x-1)(x-\lambda)(x-\mu)$; equivalently, if $q \equiv 1\pmod{3}$. Then $\phi_\alpha(g)$ has normal $\alpha$-depth $2$ if and only if $\{0,1\}\cap\{g(1),g(\lambda),g(\mu)\}=\emptyset$. Similarly, $\phi_\alpha(g)^{q^i}$ has normal $\alpha$-depth $2$ if and only if $\{0,1\}\cap\{g(1),\lambda^i g(\lambda),\mu^i g(\mu)\}=\emptyset$. Since a polynomial of degree at most three is uniquely determined by its evaluation at three different elements of $\fq$, then there are $(q-2)^3$ elements of $\alpha$-depth $2$, of which $(q-2)(q-4)^2$ are not lonely. Thus there are $4(q-2)(q-3)$ lonely elements.
\end{example}

We can also apply Proposition~\ref{prop:linearsplit} to provide a partial answer to the second open question in~\cite{GoveGary}. 

\begin{example}\label{ex:p=n+1}
Suppose $n = 4$, $q=5$, then $x^4-1 = (x-1)(x-2)(x-3)(x-4)$, with $\ord(1) = 1$, $\ord(4) = 2$ and $\ord(2)=\ord(3)=4$. A direct application of Proposition~\ref{prop:linearsplit} shows that there are no $(\alpha,2)$-sociable elements. 
\end{example}

Example~\ref{ex:p=n+1} generalizes in an obvious way.

\begin{proposition}
Let $q = n+1$, then there are no $(\alpha, 2)$-sociable elements of $\fqn$. 
\end{proposition}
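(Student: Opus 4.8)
The plan is to specialize Proposition~\ref{prop:linearsplit} to the case $q = n+1$, which is exactly the setting where $n \mid (q-1)$ holds with $q-1 = n$. First I would observe that since $q = n+1$, the divisibility condition $n \mid (q-1)$ is satisfied trivially (indeed $q-1 = n$), so $x^n-1$ splits into distinct linear factors over $\fq$ and the product formula~\eqref{eqn:coprime} applies. The goal is then to show that at least one factor in that product vanishes.

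The key step is to isolate the factor of~\eqref{eqn:coprime} corresponding to a particular choice of $i$. The formula gives the count as $\prod_{i=1}^n \left(q - \tfrac{n}{(i,n)} - 1\right)$. Taking $i = n$ (or equivalently the root $\theta_n = \theta^n = 1$), we have $(i,n) = (n,n) = n$, so $\tfrac{n}{(i,n)} = 1$; this is the factor coming from the root $\theta = 1$, which always lies in $\fq$ and has order $1$. The corresponding factor is then $q - 1 - 1 = q - 2$. But more usefully, I would look for the factor that vanishes: since $q = n+1$, we have $q - 1 = n$, and the factor $q - \tfrac{n}{(i,n)} - 1$ equals zero precisely when $\tfrac{n}{(i,n)} = q - 1 = n$, i.e.\ when $(i,n) = 1$. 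Such an $i$ always exists (for instance $i = 1$), so at least one factor in the product is $(q - n - 1) = 0$.

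Concretely, the factor indexed by $i = 1$ corresponds to a primitive $n$-th root of unity $\theta$ with $\ord(\theta) = n = q-1$, and the number of forbidden values for $g(\theta)$ is $\ord(\theta) + 1 = n + 1 = q$, which exhausts all $q$ elements of $\fq$. Hence there is no admissible value of $g(\theta)$ at that root, forcing the entire product to be zero, so the number of $(\alpha,2)$-sociable elements is $0$.

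The main obstacle, though minor, is confirming that the hypothesis $n \mid (q-1)$ of Proposition~\ref{prop:linearsplit} genuinely applies here and that a primitive $n$-th root of unity of order exactly $n = q-1$ exists in $\fq$; this holds because the multiplicative group $\fq^{\times}$ is cyclic of order $q - 1 = n$, so it contains an element of order $n$, which is a primitive $n$-th root of unity. Once that is in place, the vanishing factor argument is immediate and the conclusion follows directly from~\eqref{eqn:coprime}. I would also note that Example~\ref{ex:p=n+1} is the instance $q = 5$, $n = 4$ of this proposition, serving as a sanity check.
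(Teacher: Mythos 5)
Your proposal is correct and is essentially the paper's argument: the paper's proof likewise rests on the observation that a primitive $n$-th root of unity $\theta$ is a primitive element of $\fq$ (since $q-1=n$), so the forbidden values $0,\theta^0,\ldots,\theta^{n-1}$ exhaust all $q$ elements of $\fq$ and no admissible value of $g(\theta)$ remains. The only cosmetic difference is that the paper argues this directly from the root condition in Theorem~\ref{thm:tfae-sociable}, whereas you package it as a vanishing factor $q-n-1=0$ in the product formula of Proposition~\ref{prop:linearsplit} --- your ``concretely'' paragraph is precisely the paper's proof.
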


\begin{proof}
Let $\theta$ be a primitive element in $\fq$. Then $x^n-1=x^{q-1}-1=\prod_{\lambda\in \fq^*}(x-\lambda)=\prod_{i=0}^{q-2}(x-\theta^i) $. Hence for $\beta=\phi_\alpha(g)$ to be $(\alpha, 2)$-sociable it would require that $g(\theta)\ne 0$ and $g(\theta)\ne \theta^i$ for any $0\leq i\leq q-2$, which is impossible as $g(\theta)\in \fq$.
\end{proof}

The following was proved in \cite[Proposition 4.3]{GoveGary}. We include an alternative proof here.

\begin{proposition}
Suppose $\frac{x^n-1}{x-1}$ is irreducible over $\fq$. Then the number of elements that are $(\alpha,2)$-sociable is
\[
(q-2)(q^{n-1}-n-1),
\]
and the number of elements that are $(\alpha,2)$-lonely is
\[
(q-2)(n-1).
\]
\end{proposition}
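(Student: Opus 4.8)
The plan is to reduce the whole count to admissible evaluations $g(\theta)$ at a root $\theta$ of each irreducible factor of $x^n-1$, exactly in the manner of the fourth equivalence of Theorem~\ref{thm:tfae-sociable} together with Lemma~\ref{lem:trivial}. First I would record what the hypothesis buys. Writing $f_2:=\frac{x^n-1}{x-1}$, irreducibility gives $x^n-1=(x-1)f_2$ as a product of two distinct irreducibles of degrees $1$ and $n-1$. A root $\theta_2$ of $f_2$ is an $n$-th root of unity, and since $f_2$ is its minimal polynomial over $\fq$, its $\fq$-conjugates are the remaining roots of $f_2$, i.e. all $n-1$ non-identity $n$-th roots of unity; these therefore share a common order, which must be $n$ (the cyclic group of $n$-th roots of unity has a generator, and every generator occurs among them). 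In particular $\ord(\theta_2)=n$ is coprime to $p$, so $x^n-1$ is squarefree, $\tau=1$, and the multiplicity factor $q^{n(\tau-1)/\tau}=q^0=1$ in Lemma~\ref{lem:trivial} disappears.

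Next I would count the $(\alpha,2)$-sociable elements through the fourth equivalence of Theorem~\ref{thm:tfae-sociable} with $b=2$, one evaluation condition per irreducible factor. At $\theta_1=1$ the forbidden set $\{c\cdot 1^j:c\in\{0,1\}\}=\{0,1\}$ has two elements, leaving $q-2$ admissible values for $g(1)\in\fq$. At $\theta_2$ the forbidden set is $\{0\}\cup\{\theta_2^j:0\le j\le n-1\}$; as $0$ is not a root of unity and the $n$ powers $\theta_2^0,\ldots,\theta_2^{n-1}$ are distinct, this set has exactly $n+1$ elements of $\fq(\theta_2)=\F_{q^{n-1}}$, leaving $q^{n-1}-n-1$ admissible values for $g(\theta_2)$. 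Since a distinct admissible pair $(g(1),g(\theta_2))$ corresponds by the Chinese Remainder Theorem (equivalently by Lemma~\ref{lem:trivial} with $\tau=1$) to exactly one $g$ modulo $x^n-1$, the number of sociable elements is $(q-2)(q^{n-1}-n-1)$.

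For the lonely count I would count directly rather than subtract two large quantities. By Definition~\ref{def:lonely-social} a lonely element has normal $\alpha$-depth $2$ but fails to be sociable, and the characterisation of normal $\alpha$-depth forbids only the values $\{0,1\}$ at each root. At $\theta_1=1$ this coincides with the sociable condition, so loneliness is forced entirely at $\theta_2$: I would require $g(1)\notin\{0,1\}$ ($q-2$ choices) together with $g(\theta_2)\in\{\theta_2,\theta_2^2,\ldots,\theta_2^{n-1}\}$, the $n-1$ non-identity $n$-th roots of unity. Each such value lies outside $\{0,1\}$ and hence already meets the depth-$2$ constraint, giving $(q-2)(n-1)$ lonely elements.

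The only genuinely delicate step is the bookkeeping of forbidden values, namely confirming that $\{0\}\cup\{\theta_2^j\}$ has precisely $n+1$ distinct members and that no coincidence of the shape $c_1=c_2\theta_2^j$ (the pitfall flagged in the remark following Proposition~\ref{prop:linearsplit}) collapses the count. Here this is automatic because $\ord(\theta_2)=n$ makes the powers distinct and $0$ is not a root of unity; everything else is a routine application of Theorem~\ref{thm:tfae-sociable} and Lemma~\ref{lem:trivial} once $\tau=1$ and $\ord(\theta_2)=n$ are in hand.
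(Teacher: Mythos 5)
Your proof is correct and takes essentially the same route as the paper's: apply the fourth equivalence of Theorem~\ref{thm:tfae-sociable} with $b=2$ to each irreducible factor of $x^n-1$, and convert admissible evaluation pairs $(g(1),g(\theta_2))$ into a count of elements via Lemma~\ref{lem:trivial}. If anything, your bookkeeping is more careful than the paper's own proof, which omits the forbidden values $0$ and $\theta^0=1$ at the roots of $\frac{x^n-1}{x-1}$ and consequently writes the sociable count as $(q-2)(q^{n-1}-n+1)$ in the proof body (inconsistent with the proposition it is proving), whereas your count of $n+1$ forbidden values at $\theta_2$, together with your direct (rather than subtractive) count of the lonely elements, yields exactly the stated formulas.
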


\begin{proof}
Recall that $\frac{x^n-1}{x-1}$ is irreducible over $\fq$ if and only if $q$ is primitive modulo $n$. Then $\{\theta^{q^i}\colon i = 0, \ldots, n-2\} = \{\theta^i\colon i = 1, \ldots, n-1\}$ is the set of distinct roots of $\frac{x^n-1}{x-1}$. Thus, an element $\phi_\alpha(g)$ is $(\alpha,2)$-sociable if and only if $g(1) \neq 0,1$ and $g(\theta) \neq \theta^i$ for $i = 1,2, \ldots, n-1$. Hence, there are $(q-2)(q^{n-1}-n+1)$ elements that are $(\alpha,2)$-sociable and $(q-2)(n-1)$ lonely elements in $\fqn$.  
\end{proof}

The following example examines two cases of $(\alpha, 3)$-sociable elements, giving the first directions towards the third open problem in~\cite{GoveGary}.

\begin{example}
\begin{enumerate}
\item Consider the case $q=7$, $n=3$, $b=3$. Then $x^n-1=(x-1)(x-2)(x-4)$, and $2^3=1$. Now the set $\{c \theta^j:c\in \{0,1,2\},j\in \{0,1,2\}\}$ is equal to $\{0,1,2\}$ for $\theta=1$ and $\{0,1,2,4\}$ for $\theta=2,4$. Thus the number of $(\alpha,3)$-sociable elements is $(7-3)(7-4)^2=36$. 

\item In the case $q=13$, $n=3$, $b=3$, we have $x^n-1=(x-1)(x-3)(x-9)$. Now the set $\{c \theta^j:c\in \{0,1,2\},j\in \{0,1,2\}\}$ is equal to $\{0,1,2\}$ for $\theta=1$ and $\{0, 1, 2, 3, 5, 6, 9\}$ for $\theta=3,9$. Thus the number of $(\alpha,3)$-sociable elements is $(13-3)(13-7)^2=36$. 
\end{enumerate}
\noindent These two examples illustrates how extra care must be taken when an element of $\{0,\ldots,b-1\}$ is a nontrivial $n$-th root of unity.
\end{example}

\section{Conclusions and future directions}
In this paper, we study a generalization of normal elements of depth $b$, as presented in~\cite{GoveGary}. Since depth is not invariant under conjugation, we further analyze the depth of the conjugates of normal elements. 

The notion of depth readily lends itself to further generalization. One such ``natural'' generalization is as follows. Given some total ordering $\mathcal{O}$ of the elements of $\fqn$, say $\mathcal{O} = \{o_0, o_1, \ldots, o_{q^n-1}\}$, an element $\beta \in \fqn$ has normal $(\mathcal{O},\alpha)$-depth $b$ if $\beta - o_0\alpha, \beta - o_1\alpha, \ldots, \beta - o_{b-1}\alpha$ are simultaneously normal. Here, $\beta$ has $\alpha$-depth $b$ if $o_i = i$ for $i = 0, \ldots, b-1$. Some interesting questions here occur when $\alpha$ is a normal element of $\fqn$ over $\fq$ and $\mathcal{O}_\zeta = (0, \zeta, \zeta^2, \ldots, \zeta^{q^n-2})$, for a primitive element $\zeta \in \fqn$. Determining conditions for which $\beta$ has $(\mathcal{O}_\zeta, \alpha)$-depth $2$, or statistics on the possible values of $b$ for which $\beta$ has $(\mathcal{O}_\zeta, \alpha)$-depth $b$ is the subject of future work.


\begin{thebibliography}{99}

\bibitem{GoveGary}
G. Effinger and G. L. Mullen, Two extended Euler functions with applications to latin squares and bases of finite field extensions, \emph{Bulletin of the Institute for Combinatorics and its Applications}, {\bf 85} (2019), 92-111. 

\bibitem{LS}
H. W. Lenstra and R. Schoof, Primitive Normal Bases for Finite Fields, \emph{Mathematics of Computation}, {\bf 48} (1987), 217-231.

\bibitem{prim-1-normal}
L. Reis and D. Thomson, Existence of primitive $1$-normal elements in finite fields, \emph{Finite Fields and Their Applications}, {\bf 51} (2018), 238-269. 

\bibitem{Steel}
A. Steel, A New Algorithm for the Computation of Canonical Forms of Matrices over Fields, \emph{Journal of Symbolic Computation}, {\bf 24} (1997), 409-432.

\end{thebibliography}
\end{document}